\renewcommand{\@seccntformat}[1]{{\csname the#1\endcsname}.\hspace{.5em}}
\newtheorem{thm}{Theorem}[section]
\newtheorem{conj}[thm]{Conjecture}
\newtheorem{lem}[thm]{Lemma}
\newtheorem{remark}[thm]{Remark}
\renewcommand{\qed}{\hfill$\Box$\medskip}
\renewcommand{\thefootnote}{*}
\numberwithin{equation}{section}
\begin{document}

\begin{center}
{\large\bf Proof of some supercongruences via the Wilf-Zeilberger method}
\end{center}

\vskip 2mm \centerline{Guo-Shuai Mao}
\begin{center}
{\footnotesize $^1$Department of Mathematics, Nanjing
University of Information Science and Technology, Nanjing 210044,  People's Republic of China\\
{\tt maogsmath@163.com  } }
\end{center}

%%date: January 4, 2011
%\vskip 5mm
%\noindent {\it Suggested Running title}: Two Identities of Gould

\vskip 0.7cm \noindent{\bf Abstract.}
In this paper, we prove some supercongruences via the Wilf-Zeilberger method. For instance, for any odd prime $p$ and positive integer $r$ and $\delta\in\{1,2\}$, we have
\begin{align*}
\sum_{n=0}^{(p^r-1)/\delta}\frac{\left(\frac12\right)^5_n}{n!^5}(10n^2+6n+1)(-4)^n&\equiv\begin{cases}p^{2r}\ \pmod{p^{r+4}} &\tt{if}\ r\leq4,
\\0\ \pmod{p^{r+4}} &\tt{if}\ r\geq5.\end{cases}
\end{align*}
\vskip 3mm \noindent {\it Keywords}: Supercongruences; binomial coeficients; Wilf-Zeilberger method.

\vskip 0.2cm \noindent{\it AMS Subject Classifications:} 11A07, 05A10.

\renewcommand{\thefootnote}{**}

\section{Introduction}
In the past decade, many researchers studied supercongruences via the Wilf-Zeilberger (WZ) method. For instance, W. Zudilin \cite{zudilin-jnt-2009} proved several Ramanujan-type supercongruences by the WZ method. One of them, conjectured by van Hamme, says that
\begin{align}\label{wzpr}
\sum_{k=0}^{(p-1)/2}(4k+1)(-1)^k\left(\frac{\left(\frac12\right)_k}{k!}\right)^3\equiv(-1)^{(p-1)/2}p\pmod{p^3},
\end{align}
where $(a)_n=a(a+1)\ldots(a+n-1) (n\in\{1,2,\ldots\})$ with $(a)_0=1$ is the raising factorial for $a\in\mathbb{C}$. He \cite{H-pams-2015} also obtained some supercongruences modulo $p^4$.

For $n\in\mathbb{N}$, define
$$H_n:=\sum_{0<k\leq n}\frac1k, H_0=0.$$
This $H_n$ with $n\in\mathbb{N}$ are the classical harmonic numbers. Let $p>3$ be a prime. J. Wolstenholme \cite{wolstenholme-qjpam-1862} proved that
$$H_{p-1}\equiv0\pmod{p^2}\ \mbox{and}\ H_{p-1}^{(2)}\equiv0\pmod p,$$
which imply that
\begin{align}
\binom{2p-1}{p-1}\equiv1\pmod{p^3}.\label{2p1p}
\end{align}
Throughout the paper, $p$ is an odd prime and $r$ is a positive integer. Guillera and Zudilin \cite{GZ-pams-2012} proved that
$$
\sum_{n=0}^{(p-1)/2}\frac{\left(\frac12\right)^5_n}{n!^5}(10n^2+6n+1)(-4)^n\equiv p^2\pmod{p^5}.
$$
We should generalize their result to the following form:
\begin{thm}\label{Th10ngz} For $\delta\in\{1,2\}$, we have
\begin{align}\label{5n1}
\sum_{n=0}^{(p^r-1)/\delta}\frac{\left(\frac12\right)^5_n}{n!^5}(10n^2+6n+1)(-4)^n&\equiv\begin{cases}p^{2r}\ \pmod{p^{r+4}} &\tt{if}\ r\leq4,
\\0\ \pmod{p^{r+4}} &\tt{if}\ r\geq5.\end{cases}
\end{align}
\end{thm}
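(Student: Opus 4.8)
The plan is to prove the congruence by exhibiting an explicit Wilf--Zeilberger pair and then carrying out a $p$-adic analysis of the resulting boundary term. Writing the summand as
\[b(n)=\frac{\left(\frac12\right)^5_n}{n!^5}(10n^2+6n+1)(-4)^n,\]
I would first search, via Gosper's or Zeilberger's algorithm, for a two-variable hypergeometric term $F(n,k)$ together with a rational certificate $R(n,k)$ so that $G(n,k)=R(n,k)F(n,k)$ and the pair $(F,G)$ satisfies the WZ equation
\[F(n+1,k)-F(n,k)=G(n,k+1)-G(n,k),\]
normalized so that $G(n,0)=b(n)$. This is the pair underlying the Guillera--Zudilin evaluation; once it is written down explicitly, the WZ equation itself is a routine rational-function identity that is checked directly, so this step is mechanical rather than conceptual.

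The second step is to convert the WZ relation into a clean telescoping identity for the truncated sum. Summing the relation over $k\geq0$ for fixed $n$ and using that both $F$ and $G$ are $p$-adically small for large $k$, I obtain $G(n,0)=\Phi(n)-\Phi(n+1)$, where $\Phi(m):=\sum_{k\geq0}F(m,k)$ is a $p$-adically convergent sum. Telescoping in $n$ then yields
\[\sum_{n=0}^{N}b(n)=\Phi(0)-\Phi(N+1),\qquad N=\frac{p^r-1}{\delta}.\]
The term $\Phi(0)$ is independent of $r$ and, after checking the normalization of the pair, should vanish $p$-adically to the required precision, so that the entire $r$-dependence -- and in particular the main term $p^{2r}$ -- is forced to come from the single quantity $\Phi(N+1)=\sum_{k\geq0}F(N+1,k)$. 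Thus everything reduces to evaluating $\Phi(N+1)$ modulo $p^{r+4}$ and showing it is congruent to $-p^{2r}$.

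The third step, which I expect to be the main obstacle, is the $p$-adic estimation of $\Phi(N+1)$ uniformly in $r$. The key structural feature is that $N+1=(p^r+1)/2$ (for $\delta=2$) or $N+1=p^r$ (for $\delta=1$), so the Pochhammer factor $\left(\frac12\right)_{N+1}$ picks up the term $p^r/2$ exactly once, contributing a factor $p^r$; tracking where a second factor $p^r$ enters, from the fifth power together with the explicit polynomial and the certificate $R$, is what produces the main term $p^{2r}$. Pinning down the congruence all the way to $p^{r+4}$, rather than merely to $p^{2r}$, is the delicate part: it requires expanding the unit part of each Pochhammer quotient and controlling the secondary contributions through harmonic-sum congruences of Wolstenholme type, such as $H_{p-1}\equiv0\pmod{p^2}$, $H^{(2)}_{p-1}\equiv0\pmod p$, and \eqref{2p1p}, together with their $p^r$-analogues. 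One must also bound the tail $\sum_{k\geq k_0}F(N+1,k)$ to guarantee that only finitely many $k$ affect the residue modulo $p^{r+4}$.

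Finally, I would treat the two values of $\delta$ in parallel. For $\delta=2$ the computation lands directly on $\Phi((p^r+1)/2)$; for $\delta=1$ the extra summands with $(p^r-1)/2<n\leq p^r-1$ are reincorporated through the same telescoping, the point being that the boundary term at $N+1=p^r$ again exposes the factor $p^r$ from $\left(\frac12\right)_{p^r}$, so that both choices of $\delta$ yield the same residue. The split between $r\leq4$ and $r\geq5$ is then automatic: since $p^{2r}\equiv0\pmod{p^{r+4}}$ as soon as $r\geq4$, the two stated cases are a single congruence $\sum_{n=0}^{N}b(n)\equiv p^{2r}\pmod{p^{r+4}}$, and the only genuine information beyond ``$\equiv0$'' occurs for $r\in\{1,2,3\}$, where the base case $r=1$ recovers the Guillera--Zudilin congruence modulo $p^5$.
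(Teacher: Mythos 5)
There is a genuine gap: what you have written is a strategy outline in which every step that actually constitutes the theorem is deferred. You never exhibit the WZ pair, and the three claims that carry all the content --- that $\Phi(0)$ ``should vanish $p$-adically to the required precision,'' that $\Phi(N+1)\equiv -p^{2r}\pmod{p^{r+4}}$, and that the tail $\sum_{k\ge k_0}F(N+1,k)$ is negligible --- are each flagged as expected rather than proved. Moreover, your structural picture of where the main term comes from does not match how the relevant pair actually behaves. In the Guillera--Zudilin pair used for this series one has
$$
F(n,k)=(10n^2+12nk+6n+4k^2+4k+1)\frac{\left(\tfrac12\right)_n\left(\tfrac12+k\right)^4_n}{(1)^5_n}(-1)^n2^{2n},
$$
the summand is $F(n,0)$ (not $G(n,0)$), and one telescopes the finite double sum over $0\le n\le (p^r-1)/\delta$ and $1\le k\le (p^r-1)/2$; the main term then appears with no $p$-adic limiting process at all, as the single boundary value $F(0,(p^r-1)/2)=(2k+1)^2\big|_{k=(p^r-1)/2}=p^{2r}$, while for $n\ge 1$ the factor $\left(\tfrac{p^r}{2}\right)_n^4$ supplies $p^{4r}$. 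Your heuristic --- one factor $p^r$ from $\left(\tfrac12\right)_{N+1}$ and a ``second factor $p^r$ from the certificate'' --- is therefore pointing at the wrong boundary, and there is no evidence that your infinite-sum normalization (with $G(n,0)=b(n)$ and $\Phi$ convergent) even exists for this series with the properties you need.

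Beyond the architecture, the actual difficulty of the theorem lives in the two boundary estimates, and the proposal contains none of the inputs they require. Showing $\sum_{n}F(n,(p^r-1)/2)\equiv p^{2r}\pmod{p^{2r+3}}$ reduces, after extracting $p^{4r}/n^2$ and applying Lucas-type reductions, to the known congruence for $\sum_{k}(-1)^k\binom{2k}{k}/k^2$ from Guillera--Zudilin; showing the $G$-boundary sums (at $n=(p^r+1)/2$ for $\delta=2$ and at $n=p^r$ for $\delta=1$, which must be treated separately, not ``in parallel'') are $\equiv 0\pmod{p^{r+4}}$ requires the Pan--Sun congruences $l\binom{2l}{l}\binom{2k}{k}\equiv-2p^r\pmod{p^{r+1}}$ and $-2p^r/(l\binom{2l}{l})\equiv\binom{2k}{k}\pmod{p^2}$ for $k+l=p^r$, together with explicit $p$-adic valuations of binomial coefficients via Legendre's formula. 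Harmonic-sum congruences of Wolstenholme type, which you cite as the main tool, play essentially no role here. As it stands the proposal would not compile into a proof without independently rediscovering all of this.
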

Actually, we have the following conjecture which cannot be proved by our method:
\begin{conj}
\begin{equation*}
\sum_{n=0}^{(p^r-1)/\delta}\frac{\left(\frac12\right)^5_n}{n!^5}(10n^2+6n+1)(-4)^n\equiv p^{2r}\pmod{p^{2r+3}}.
\end{equation*}
\end{conj}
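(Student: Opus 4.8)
The plan is to prove the sharper congruence by induction on $r$, treating each $\delta\in\{1,2\}$ separately and taking the Guillera--Zudilin supercongruence as the base case. Write $S_r(\delta)=\sum_{n=0}^{(p^r-1)/\delta}\frac{\left(\frac12\right)_n^5}{n!^5}(10n^2+6n+1)(-4)^n$, and record the elementary reformulation $\frac{\left(\frac12\right)_n^5}{n!^5}(-4)^n=(-1)^n\binom{2n}{n}^5/256^n$, which shows every summand is $p$-integral. I would reduce the conjecture, for each $\delta$, to the single Dwork-type lifting relation
\begin{equation*}
S_r(\delta)\equiv p^{2}\,S_{r-1}(\delta)\pmod{p^{2r+3}}\qquad(r\ge2).
\end{equation*}
Granting it, the inductive hypothesis $S_{r-1}(\delta)\equiv p^{2r-2}\pmod{p^{2r+1}}$ gives $p^{2}S_{r-1}(\delta)\equiv p^{2r}\pmod{p^{2r+3}}$, whence $S_r(\delta)\equiv p^{2r}\pmod{p^{2r+3}}$. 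The base case $S_1(\delta)\equiv p^{2}\pmod{p^5}$ is Guillera--Zudilin for $\delta=2$, and for $\delta=1$ it follows because each excess term with $(p-1)/2<n\le p-1$ has $n\ge(p+1)/2$, forcing a carry in $n+n$, so that $v_p\!\left(\binom{2n}{n}\right)\ge1$ and the term is divisible by $p^5$. The structural point to keep in mind is that the factor $p^{2r}$ is not produced by any single ``singular'' summand --- indeed the top term $\binom{p^r-1}{(p^r-1)/2}$ is a $p$-adic unit --- but by a global cancellation, so a term-by-term estimate must fail and one has to deform the whole sum.

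The engine I would use for the lifting relation is the creative $q$-microscope of Guo and Zudilin, in the Dwork-dilation form designed for exactly this $p^r$ setting. Replace $\frac{\left(\frac12\right)_n}{n!}$ by $\frac{(q;q^2)_n}{(q^2;q^2)_n}$ (which tends to $\binom{2n}{n}/4^n$ as $q\to1$), deform $10n^2+6n+1$ to a Laurent polynomial $P_n(q)$ normalised so that $P_0(q)=1$ and $P_n(1)=10n^2+6n+1$, deform $(-4)^n$ to $z^n$ with $z$ a rational function of $q$ tending to $-4$, and introduce a free parameter $a$ into two of the five factors $(q;q^2)_n$, forming
\begin{equation*}
S_r(\delta;a,q)=\sum_{n=0}^{(p^r-1)/\delta}P_n(q)\,\frac{(aq;q^2)_n\,(q/a;q^2)_n\,(q;q^2)_n^{3}}{(q^2;q^2)_n^{5}}\,z^n,
\end{equation*}
which reduces to $S_r(\delta)$ at $a=q=1$. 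The aim is a parametric $q$-congruence expressing $S_r(\delta;a,q)$ through the dilated sum $S_{r-1}(\delta;a,q^{p})$ modulo $(1-aq^{p^r})(a-q^{p^r})\,\Phi_{p^r}(q)^{3}$. At $a=1$ the two parameter factors collapse to $(1-q^{p^r})^2$, so the modulus has cyclotomic $p$-content $2r+3$, namely $2r$ from $(1-q^{p^r})^2$ and $3$ from $\Phi_{p^r}(q)^{3}$; the standard $q\to1$ passage of the method then converts the modulus into $p^{2r+3}$ and the dilation relation into $S_r(\delta)\equiv p^{2}S_{r-1}(\delta)$, yielding the lifting relation for both $\delta$ simultaneously.

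The main obstacle is precisely the gain of \emph{two} powers of $p$ per inductive step. The plain WZ telescoping behind Theorem \ref{Th10ngz} contributes only one power at each level --- this is exactly why the modulus there is $p^{r+4}$ rather than $p^{2r+3}$ --- and the missing second power at every level is what the auxiliary parameter $a$ is meant to supply by doubling a cyclotomic factor. Two hard ingredients stand in the way. First, one must find the correct deformations $P_n(q)$ and $z^n$ of $10n^2+6n+1$ and $(-4)^n$, together with the right placement of $a$, so that the $a=q=1$ limit is exactly $S_r(\delta)$ \emph{and} the parametric sum is genuinely divisible by $(1-aq^{p^r})(a-q^{p^r})\Phi_{p^r}(q)^{3}$; for a fifth power the placement of $a$ among the five factors is tightly constrained. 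Second, and more seriously, one must prove that divisibility and propagate it through the dilation to all levels $r$, keeping the full power $\Phi_{p^r}(q)^{3}$ rather than a reduced one; the $r=1$ instance should be within reach of a $q$-Zeilberger certificate, but the uniform propagation appears to demand a genuinely new $q$-hypergeometric identity, and this is the step at which the WZ method of the present paper stops --- hence the statement is recorded only as a conjecture. A more elementary fallback would expand $\binom{2j+2pm}{j+pm}$ by a Jacobsthal--McIntosh formula to precision $p^{2r+3}$ and sum over the digit decomposition $n=j+pm$, but controlling a fivefold product of such expansions to that precision is no easier.
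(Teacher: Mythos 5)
This statement is recorded in the paper as an open conjecture --- the author explicitly notes it ``cannot be proved by our method'' --- so there is no proof in the paper to compare with, and your text is, by your own admission, a program rather than a proof. The parts you do argue are sound: writing $S_r(\delta)$ with summands $(-1)^n(10n^2+6n+1)\binom{2n}{n}^5/256^n$, the base case $S_1(2)\equiv p^2\pmod{p^5}$ is exactly Guillera--Zudilin, and your carry observation that $p\mid\binom{2n}{n}$ for $(p-1)/2<n\le p-1$ correctly disposes of the extra terms when $\delta=1$; the inductive arithmetic from the lifting relation to the conjecture is also fine. But everything hangs on the Dwork-type relation $S_r(\delta)\equiv p^2S_{r-1}(\delta)\pmod{p^{2r+3}}$, which you never establish: you defer both of its ingredients --- finding deformations $P_n(q)$, $z$ and a placement of $a$ whose $a=q=1$ limit is $S_r(\delta)$ \emph{and} which is divisible by $(1-aq^{p^r})(a-q^{p^r})\Phi_{p^r}(q)^3$, and then propagating that divisibility through the dilation uniformly in $r$ --- to an unproduced ``genuinely new $q$-hypergeometric identity.'' Reducing a conjecture to a sharper unproved congruence is not a proof, and this is precisely the step at which the paper itself stops.

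Moreover, your modulus bookkeeping would not deliver $p^{2r+3}$ even if the parametric congruence were granted. One cannot substitute $q=1$ into a congruence whose modulus contains $(1-q^{p^r})^2$: that factor vanishes at $q=1$ and the ``congruence'' would degenerate into an exact identity, which is absurd. In the actual Guo--Zudilin passage the factor $(1-aq^{p^r})(a-q^{p^r})$ at $a=1$ is converted (via rational-function arguments and evaluation at roots of unity) into $\Phi_{p^r}(q)^2$ only, and since $\Phi_{p^r}(1)=p$ the top-level gain is $p^2$, not the $p^{2r}$ you claim from ``cyclotomic $p$-content $2r$.'' This is exactly why that machinery, like the WZ arguments for Theorems \ref{Thgjmaa} and \ref{Thglarxiv2} and the results of \cite{GZ-aim-2019}, lands on moduli of shape $p^{r+2}$ (and here $p^{r+4}$) rather than $p^{2r+3}$: to reach $p^{2r+3}$ you would need a doubled cyclotomic factor $\Phi_{p^j}(q)^2$ at \emph{every} level $1\le j\le r$, i.e.\ a uniform family of parametric $q$-congruences, which is a strictly stronger statement than the single top-level divisibility you propose to certify. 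So the proposal has a genuine gap at its central step, and its advertised cyclotomic count is incorrect besides.
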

Zudilin \cite{zudilin-jnt-2009} also proved that
\begin{align}\label{20n33}
\sum_{n=0}^{p-1}\frac{\left(\frac12\right)_n\left(\frac12\right)_{2n}}{n!^3}\frac{20n+3}{2^{4n}}\equiv3 p(-1)^{(p-1)/2}\pmod{p^3},
\end{align}
\begin{align}\label{120n2}
\sum_{n=0}^{p-1}\frac{\left(\frac12\right)^3_n\left(\frac12\right)_{2n}}{n!^5}\frac{120n+34n+3}{2^{6n}}\equiv3p^2\pmod{p^5}.
\end{align}
We generalize (\ref{20n33}) to the following form:
\begin{thm}\label{Th20n3}
$$
\sum_{n=0}^{p^r-1}\frac{\left(\frac12\right)_n\left(\frac12\right)_{2n}}{n!^3}\frac{20n+3}{2^{4n}}\equiv3(-1)^{(p^r-1)/2}p^r\pmod{p^{r+2}}.
$$
\end{thm}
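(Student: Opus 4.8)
The plan is to mirror the Wilf--Zeilberger approach used for Theorem~\ref{Th10ngz}. Writing the summand as
$$a(n)=\frac{(1/2)_n(1/2)_{2n}}{n!^3}\cdot\frac{20n+3}{2^{4n}}=\frac{(4n)!}{n!^4}\cdot\frac{20n+3}{2^{10n}},$$
I would first produce a two-variable hypergeometric companion $F(n,k)$ together with its certificate $G(n,k)$, forming a WZ pair
$$F(n+1,k)-F(n,k)=G(n,k+1)-G(n,k),$$
normalised so that the truncated series is recovered through the $k=0$ slice, i.e.\ $G(n,0)=a(n)$. The natural candidate for $F$ carries the same factorial structure as $a$ with the summation index shifted into the second parameter $k$; the certificate $G$ is obtained by Gosper/Zeilberger and then verified directly by clearing denominators, and one checks that $F(n,k)$ decays rapidly as $k\to\infty$ for fixed $n$.

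Summing the WZ relation first over all $k\ge 0$ and then over $0\le n\le p^r-1$ telescopes both variables and yields
$$\sum_{n=0}^{p^r-1}a(n)=\Phi(0)-\Phi(p^r),\qquad \Phi(n):=\sum_{k\ge 0}F(n,k).$$
Thus the whole problem is transferred to evaluating the two boundary quantities, the substantive one being the block-boundary term $\Phi(p^r)$; this is the genuinely new piece relative to Zudilin's modulus-$p^3$ computation in \eqref{20n33}. For the sign I would record the elementary identity $(-1)^{(p^r-1)/2}=\left(\frac{-1}{p}\right)^r$, which follows from $(p^r-1)/2\equiv r\,(p-1)/2\pmod 2$, so that the target residue $3(-1)^{(p^r-1)/2}p^r$ is tracked uniformly in $r$.

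For the $p$-adic analysis I would estimate $v_p\!\left(F(p^r,k)\right)$ for each $k$. The Pochhammer factors $(1/2)_{\,\cdot\,}$ and the factorials evaluated near the boundary $n=p^r$ carry controlled powers of $p$: using the standard valuation of $(4n)!/n!^4$ and of $(1/2)_m/m!$, one isolates a single dominant range of $k$ where the term has $p$-adic valuation exactly $r$ and extracts the leading contribution $3(-1)^{(p^r-1)/2}p^r$, while showing that all remaining terms of $\Phi(p^r)$, together with $\Phi(0)$, contribute nothing below $p^{r+2}$. The Wolstenholme relations quoted in the introduction, and the congruence \eqref{2p1p}, are the relevant tools here. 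A convenient organisation is induction on $r$, with base case $r=1$ being exactly \eqref{20n33}, so that the inductive step reduces to controlling the increment $\Phi(p^{r})-\Phi(p^{r-1})$ modulo $p^{r+2}$ and keeps the book-keeping linear in $r$.

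The main obstacle will be pushing the estimate for the boundary term $\Phi(p^r)$ to the full precision $p^{r+2}$, that is, two orders beyond the leading $p^r$. This is delicate because it requires not merely the valuation but the exact residues of the relevant factorial quotients modulo $p^2$ after the factor $p^r$ has been extracted, so the second-order harmonic corrections $H_{p^r-1}$ and $H^{(2)}_{p^r-1}$ must be tracked and shown to cancel. Verifying that the certificate satisfies $G(n,0)=a(n)$ and that $F$ decays fast enough to justify the interchange of the summations is routine but must be done carefully, since any boundary leakage would corrupt the $p^{r+2}$ congruence.
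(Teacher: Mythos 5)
Your high-level strategy coincides with the paper's: build a WZ pair whose boundary slice recovers the summand, telescope over $0\le n\le p^r-1$, and evaluate the boundary term at $n=p^r$ modulo $p^{r+2}$. But the proposal stops exactly where the difficulty begins, and the one concrete claim you make about that boundary term mischaracterizes how it actually behaves. The paper uses the explicit pair
$$
F(n,k)=\frac{(-1)^{n+k}(20n-2k+3)}{4^{5n-k}}\binom{2n}n\frac{\binom{4n+2k}{2n+k}\binom{2n+k}{2k}\binom{2n-k}{n}}{\binom{2k}k},
$$
for which $F(n,k)=0$ when $n<k$, so both telescoping sums are finite and no decay or interchange-of-summation issues arise; the telescoping gives $\sum_{n=0}^{p^r-1}F(n,0)=F(p^r-1,p^r-1)+\sum_{k=1}^{p^r-1}G(p^r,k)$. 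The boundary sum is then split at $k=(p^r\pm1)/2$: the range $1\le k\le(p^r-1)/2$ vanishes mod $p^{r+2}$; the single term $k=(p^r+1)/2$ contributes $(-1)^{(p^r-1)/2}3p^r(1-5pq_p(2))$; and the upper tail $(p^r+3)/2\le k\le p^r-1$ contributes $(-1)^{(p^r-1)/2}15p^{r+1}q_p(2)$. The Fermat-quotient terms cancel only after adding the last two pieces. Your assertion that a single dominant range carries the full $3(-1)^{(p^r-1)/2}p^r$ while ``all remaining terms contribute nothing below $p^{r+2}$'' therefore misses the essential mechanism: no individual piece is congruent to $3(-1)^{(p^r-1)/2}p^r$ mod $p^{r+2}$, and the tail is \emph{not} negligible at this precision.

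Two further gaps. First, the tools you name (Wolstenholme, \eqref{2p1p}) are not the ones that drive the estimate: because $\binom{2k}k$ sits in the denominator of $G(p^r,k)$, one needs the Pan--Sun congruences $l\binom{2l}l\binom{2k}k\equiv-2p^r\pmod{p^{r+1}}$ and $-2p^r/(l\binom{2l}l)\equiv\binom{2k}k\pmod{p^2}$ for $k+l=p^r$, and the tail is evaluated by reindexing $k\mapsto p^r-k$, extracting the terms with $2k+1\equiv0\pmod{p^{r-1}}$, and invoking Sun's congruence $\sum_{k=0}^{(p-3)/2}\binom{2k}k/((2k+1)4^k)\equiv-(-1)^{(p-1)/2}q_p(2)\pmod{p^2}$; none of this appears in your plan. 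Second, the proposed induction on $r$ does not close: to control $\Phi(p^{r-1})-\Phi(p^r)$ modulo $p^{r+2}$ you would need level-$(r-1)$ information one power of $p$ beyond what the inductive hypothesis supplies (which is only modulo $p^{r+1}$). The paper instead evaluates the $n=p^r$ boundary directly for every $r>1$ and quotes Zudilin's \eqref{20n33} for the case $r=1$.
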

\begin{remark} \rm Actually, we also can obtain the following result with $\delta\in\{1,2\}$ which generalizes (\ref{120n2}), here we won't prove it since the proof is similar to that of Theorem \ref{Th10ngz}.
\begin{align*}
\sum_{n=0}^{(p^r-1)/\delta}\frac{\left(\frac12\right)^3_n\left(\frac12\right)_{2n}}{n!^5}\frac{(120n^2+34n+3)}{2^{6n}}&\equiv\begin{cases}p^{2r}\ \pmod{p^{r+4}} &\tt{if}\ r\leq4,
\\0\ \pmod{p^{r+4}} &\tt{if}\ r\geq5.\end{cases}
\end{align*}
\end{remark}
Guo \cite{g-jmaa-2018} proved that
$$
\sum_{k=0}^{(p^r-1)/2}\frac{4k+1}{(-64)^k}\binom{2k}k^3\equiv(-1)^{\frac{(p-1)r}2}p^r\pmod{p^{r+2}},
$$
and in the same paper he proposed a conjecture as follow:
\begin{conj}\label{Guo-2018}{\rm (\cite[Conjecture 5.1]{g-jmaa-2018})}
$$
\sum_{k=0}^{p^r-1}\frac{4k+1}{(-64)^k}\binom{2k}k^3\equiv(-1)^{\frac{(p-1)r}2}p^r\pmod{p^{r+2}}.
$$
\end{conj}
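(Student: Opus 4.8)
The plan is to reduce to Pochhammer form and then isolate the genuinely new content with the same Wilf--Zeilberger scheme used for Theorems~\ref{Th10ngz} and~\ref{Th20n3}. Since $\binom{2k}k=4^k(\tfrac12)_k/k!$, we have $\binom{2k}k^3/(-64)^k=(-1)^k\bigl((\tfrac12)_k/k!\bigr)^3$, so the sum in Conjecture~\ref{Guo-2018} equals $\sum_{k=0}^{p^r-1}(4k+1)(-1)^k(\tfrac12)_k^3/k!^3$, the $p^r$-term truncation of Bauer's series. The sign is consistent with Guo's half-range result: because $\frac{p^r-1}2=\frac{p-1}2(1+p+\cdots+p^{r-1})\equiv\frac{(p-1)r}2\pmod2$, we have $(-1)^{(p-1)r/2}=(-1)^{(p^r-1)/2}$, matching the sign in Theorem~\ref{Th20n3}. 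By Guo's theorem quoted just before Conjecture~\ref{Guo-2018}, the partial sum up to $k=(p^r-1)/2$ is already $\equiv(-1)^{(p-1)r/2}p^r\pmod{p^{r+2}}$; subtracting it reduces the conjecture to the single statement
\begin{equation*}
T:=\sum_{k=(p^r+1)/2}^{p^r-1}(4k+1)(-1)^k\frac{(\tfrac12)_k^3}{k!^3}\equiv0\pmod{p^{r+2}}.
\end{equation*}

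A naive valuation bound will not give this: for $k=\frac{p+1}2p^{r-1}$ the addition $k+k$ has a single Kummer carry, so $(\tfrac12)_k/k!=\binom{2k}k/4^k$ has $p$-adic valuation $1$ and the summand valuation only $3$, which is smaller than $r+2$ once $r\ge2$. The cancellation must therefore be extracted by the WZ method. I would take a WZ pair $(F,G)$, produced by Zeilberger's algorithm, with
\begin{equation*}
F(n+1,k)-F(n,k)=G(n,k+1)-G(n,k),\qquad F(0,k)=(4k+1)(-1)^k\frac{(\tfrac12)_k^3}{k!^3},
\end{equation*}
where the certificate $G$ carries an extra factor of the type $(\tfrac12)_{n+k}/(n+k)!$ that both guarantees decay in $k$ and raises the $p$-adic valuation as $n$ grows. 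Telescoping the WZ relation in $k$ over the tail range, and then in $n$ over $0\le n\le p^r-1$, turns $T$ into boundary data,
\begin{equation*}
T=\sum_{k=(p^r+1)/2}^{p^r-1}F(p^r,k)-\sum_{n=0}^{p^r-1}\Bigl(G\bigl(n,p^r\bigr)-G\bigl(n,\tfrac{p^r+1}2\bigr)\Bigr),
\end{equation*}
so that everything is now governed by $F$ and $G$ evaluated at the endpoints $k=\tfrac{p^r+1}2$, $k=p^r$ and at the slice $n=p^r$.

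The decisive step is the $p$-adic estimation of these three boundary pieces to precision $p^{r+2}$. Each of them is supported on indices $k\ge(p^r+1)/2$, so every term already contains at least one factor $(\tfrac12)_k/k!$ with a Kummer carry, and the slice $n=p^r$ together with the decay factor $(\tfrac12)_{n+k}/(n+k)!$ in $G$ should supply the additional valuation needed to reach $p^{r+2}$. I expect the main obstacle to be exactly the terms nearest the lower endpoint $k=\tfrac{p^r+1}2$, where the carry count is minimal and one power of $p$ comes for free but the remaining two do not: there the crude valuation is short by two, and closing the gap requires a second-order $p$-adic expansion of the Pochhammer quotients---the same phenomenon by which Wolstenholme's congruences upgrade $\binom{2p-1}{p-1}\equiv1$ to the mod-$p^3$ statement~\eqref{2p1p}. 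Controlling these two extra powers of $p$ uniformly in $r$, rather than the single power that Kummer's carry count alone provides, is the crux of the argument.
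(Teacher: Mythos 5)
Your reduction is sound as far as it goes: the Pochhammer rewriting, the parity identity $(-1)^{(p^r-1)/2}=(-1)^{(p-1)r/2}$, and the observation that crude Kummer-carry valuations top out at $3$ and cannot reach $p^{r+2}$ are all correct, and invoking Guo's half-range theorem legitimately reduces the conjecture to $T\equiv0\pmod{p^{r+2}}$. But from that point on you have a plan, not a proof. You never exhibit the WZ pair --- ``a certificate $G$ carrying an extra factor of the type $(\tfrac12)_{n+k}/(n+k)!$'' is a guess about its shape, and the claimed valuation-raising property of that factor is asserted, not verified. More importantly, the decisive estimates are explicitly deferred: you yourself identify ``controlling these two extra powers of $p$'' as ``the crux of the argument'' and stop there. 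That crux is precisely the hard content of the paper's proof of Theorem \ref{Thgjmaa}, carried out in Lemmas \ref{Gpr} and \ref{Gpr12pr} via the Pan--Sun congruence \eqref{dao2kk}, the Wolstenholme-type expansions \eqref{pr12pr}--\eqref{2pr12pr}, and a base-$p$ digit analysis isolating the indices with $2k+1=p^{r-1}j$; a blind proposal that names the difficulty without resolving it has a genuine gap.

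There is also a structural point on which your expectation is likely false. You hope that each boundary piece in your telescoped expression for $T$ ``should supply the additional valuation needed to reach $p^{r+2}$,'' i.e.\ vanishes separately. The paper's computation shows the analogous pieces do \emph{not} vanish separately at this precision: in \eqref{wz1}, the single certificate value $G(p^r,(p^r+1)/2)$ contributes $(-1)^{(p^r-1)/2}p^r(1-3pq_p(2))$ (Lemma \ref{Gpr}), while the tail $\sum_{k=(p^r+3)/2}^{p^r-1}G(p^r,k)$ contributes $(-1)^{(p^r-1)/2}3p^{r+1}q_p(2)$ (Lemma \ref{Gpr12pr}); the Fermat-quotient terms cancel only in the aggregate, and establishing the tail contribution requires Sun's congruence (Lemma \ref{sun1}) to two $p$-adic digits. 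So any correct execution of your scheme must anticipate $q_p(2)$-corrections at the critical order $p^{r+1}$ in the individual boundary sums and a global cancellation mechanism, rather than termwise vanishing. Note finally that the paper's route avoids Guo's half-range theorem altogether --- the main term $(-1)^{(p^r-1)/2}p^r$ emerges from the boundary value $G(p^r,(p^r+1)/2)$ of the full-range telescoping, with the case $r=1$ supplied by Sun's result \eqref{sun} --- so your decomposition, if completed, would be a different (and not obviously easier) proof, since proving $T\equiv0\pmod{p^{r+2}}$ directly is essentially as deep as the conjecture itself.
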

Guo and zudilin have proved Conjecture \ref{Guo-2018} by founding its $q$-anology, (see \cite{GZ-aim-2019}), here we give a new proof of it by the WZ method. Our way differs from their because we used the result $-2p^r/(k\binom{2k}k)\equiv\binom{2p^r-2k}{p^r-k}\pmod{p^2}$ for each $1\leq k\leq(p^r-1)/2$ which was in \cite{PS}, and we also used a result of Sun \cite{sun-jnt-2011}, $\sum_{k=0}^{(p-3)/2}\frac{\binom{2k}k}{(2k+1)4^k}\equiv-(-1)^{(p-1)/2}q_p(2)\pmod{p^2}.$

Z.-W. Sun \cite{sun-ijm-2012} proved the following congruence by the WZ method
\begin{equation}\label{sun}
\sum_{k=0}^{p-1}\frac{4k+1}{(-64)^k}\binom{2k}k^3\equiv(-1)^{\frac{(p-1)}2}p+p^3E_{p-3}\pmod{p^4}.
\end{equation}
In this paper we first prove the above conjecture.
\begin{thm}\label{Thgjmaa}
Conjecture \ref{Guo-2018} is true.
\end{thm}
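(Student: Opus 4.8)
The plan is to deduce Conjecture~\ref{Guo-2018} from the half-range evaluation that Guo has already established, and then to show that the complementary ``tail'' of the sum is negligible modulo $p^{r+2}$. Put $a_k=\frac{4k+1}{(-64)^k}\binom{2k}k^3$ and $N=p^r$. Guo's congruence
\[
\sum_{k=0}^{(N-1)/2}a_k\equiv(-1)^{\frac{(p-1)r}2}N\pmod{p^{r+2}}
\]
is available, so it suffices to prove that
\[
S:=\sum_{k=(N+1)/2}^{N-1}a_k\equiv0\pmod{p^{r+2}}.
\]

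First I would replace $k$ by $N-k$ in $S$, returning the index to the range $1\le k\le(N-1)/2$. The elementary factors normalize cleanly: $4(N-k)+1=4N-(4k-1)$, and $(-64)^{N-k}=(-64)^N(-64)^{-k}$ with $(-64)^N=-2^{6N}$ a $p$-adic unit that can be reduced modulo $p^{r+2}$ by Euler's theorem. The crucial ingredient is the reflected binomial $\binom{2N-2k}{N-k}^3$, for which I would use the Pan--Sun relation of \cite{PS}, namely $\binom{2p^r-2k}{p^r-k}\equiv\frac{-2p^r}{k\binom{2k}k}\pmod{p^2}$. I would reprove this in the sharper form required below by writing
\[
\binom{2N-2k}{N-k}=\binom{2N}{N}\,\frac{\bigl[N(N-1)\cdots(N-k+1)\bigr]^2}{(2N-2k+1)_{2k}},
\]
isolating the single factor $N=p^r$ in the numerator and the factor $2N$ in the denominator, and applying the generalized Wolstenholme congruence $\binom{2N}{N}\equiv2$.

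Cubing the relation introduces a factor $p^{3r}$, so each term of $S$ acquires $p$-adic valuation at least $3\bigl(r-v_p(k\binom{2k}k)\bigr)$. Since $v_p(k\binom{2k}k)\le r-1$ throughout the range, every $\binom{2N-2k}{N-k}$ is divisible by $p$; in particular, when $r=1$ one has $3\ge r+2$ for every term and $S\equiv0\pmod{p^3}$ at once. For $r\ge2$ the terms for which $v_p(\binom{2k}k)$ is large make $3\bigl(r-v_p(k\binom{2k}k)\bigr)$ fall below $r+2$, so term-by-term bounds fail and genuine cancellation in the sum must be used. For these terms I would expand $\binom{2N-2k}{N-k}$ to order $p^{r+2}$, cube, and re-sum; modulo $p^{r+2}$ the tail $S$ then collapses to a constant multiple of an elementary sum of the shape $\sum_k\frac{\binom{2k}k}{(2k+1)4^k}$. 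This is precisely the quantity evaluated by Sun \cite{sun-jnt-2011}, $\sum_{k=0}^{(p-3)/2}\frac{\binom{2k}k}{(2k+1)4^k}\equiv-(-1)^{(p-1)/2}q_p(2)\pmod{p^2}$, and feeding this evaluation in forces the surviving Fermat-quotient contributions to cancel, giving $S\equiv0\pmod{p^{r+2}}$. Adding back Guo's half-sum congruence yields $\sum_{k=0}^{p^r-1}a_k\equiv(-1)^{\frac{(p-1)r}2}p^r\pmod{p^{r+2}}$, which is the assertion of Theorem~\ref{Thgjmaa}.

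The main obstacle will be the sharpening of the Pan--Sun relation: it is quoted only modulo $p^2$, whereas cubing it and handling the ``bad'' terms demands an expansion of $\binom{2N-2k}{N-k}$ valid modulo $p^{r+2}$, which requires tight control of the harmonic-type corrections arising from the products $N(N-1)\cdots(N-k+1)$ and $(2N-2k+1)_{2k}$. The second delicate point is to identify the leftover sum exactly with Sun's form, matching the $1/(2k+1)$ and $4^k$ factors and tracking the $q_p(2)$ terms, so that the final cancellation is exact rather than only up to a lower-order error.
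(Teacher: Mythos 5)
Your overall strategy---quote Guo's half-range congruence as a black box and then kill the tail $S=\sum_{k=(p^r+1)/2}^{p^r-1}a_k$---is logically legitimate but entirely different from the paper, which never invokes Guo's result: it runs the WZ telescoping over the \emph{full} range $0\le n\le p^r-1$, so that the whole sum becomes $F(p^r-1,p^r-1)+\sum_{k=1}^{p^r-1}G(p^r,k)$, where each $G(p^r,k)$ carries an explicit prefactor $p^{3r}$ and only a \emph{single} $\binom{2k}{k}$ in the denominator. That single power is the whole point: one application of the Pan--Sun congruence $\binom{2p^r-2k}{p^r-k}\equiv-2p^r/\bigl(k\binom{2k}k\bigr)\pmod{p^2}$, against a $p^{2r}$ prefactor, is already accurate modulo $p^{r+2}$, and the surviving terms (those with $p^{r-1}\mid 2k+1$) genuinely reduce to Sun's sum $\sum_j\binom{2j}j/\bigl((2j+1)4^j\bigr)$.

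Your tail estimate has a genuine gap at exactly the step you flag as delicate. Writing $k=p^r-l$ and cubing the Pan--Sun relation turns $S$ into $-8p^{3r}\sum_{l}\frac{4p^r-4l+1}{(-64)^{p^r-l}}\cdot\frac{w_l^3}{l^3\binom{2l}l^3}$ with $w_l$ a $p$-adic unit. The terms that survive modulo $p^{r+2}$ are those with $v_p\bigl(l\binom{2l}l\bigr)>(2r-2)/3$, and for these the sum does \emph{not} collapse to $\sum\binom{2k}k/\bigl((2k+1)4^k\bigr)$: it produces sums with $\binom{2l}l^3$ in the \emph{denominator}; for instance the block $l=pj$ forces you to evaluate $\sum_j(-64)^j/\bigl(j^3\binom{2j}j^3\bigr)$ modulo powers of $p$ growing with $r$, an object of the same depth as the theorem being proved rather than Sun's elementary mod-$p^2$ evaluation. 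The mismatch is structural: a single reciprocal $1/\binom{2k}k$ dualizes under $k\mapsto p^r-k$ to $\binom{2l}l$ in the numerator, but $\binom{2k}k^3$ dualizes to $1/\binom{2l}l^3$. In addition, the precision problem you acknowledge is real and unresolved: when $v_p\bigl(l\binom{2l}l\bigr)>r/2$ the mod-$p^2$ relation, once cubed, is not accurate to $p^{r+2}$, and your product identity for $\binom{2N-2k}{N-k}$ only reduces this to controlling harmonic-type corrections modulo roughly $p^{r-1}$, which you do not carry out. As written the argument does not close; the full-range WZ telescoping is what circumvents both obstacles.
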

Guo and Liu \cite{gl-arxiv-2019} showed that
\begin{equation}\label{glp4}
\sum_{k=0}^{(p+1)/2}(-1)^k(4k-1)\frac{\left(-\frac12\right)_k^3}{(1)_k^3}\equiv p(-1)^{(p+1)/2}+p^3(2-E_{p-3})\pmod{p^4},
\end{equation}
where $E_n$ are the Euler numbers defined by
$$E_0=1,\ \mbox{and}\ E_n=-\sum_{k=1}^{\lfloor n/2\rfloor}\binom{n}{2k}E_{n-2k}\ \mbox{for}\ n\in\{1,2,\ldots\}.$$
They also gave some conjectures in the last section of \cite{gl-arxiv-2019}. For instance,
\begin{conj}\label{Guo-Liu} {\rm (\cite[Conjecture 5.1]{gl-arxiv-2019})}
\begin{equation*}
\sum_{k=0}^{p^r-1}(-1)^k(4k-1)\frac{\left(-\frac12\right)_k^3}{(1)_k^3}\equiv-(-1)^{\frac{(p-1)r}2}p^r\pmod{p^{r+2}}.
\end{equation*}
\end{conj}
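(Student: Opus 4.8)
The plan is to first pass from Pochhammer symbols to central binomial coefficients. Using $\left(-\frac12\right)_k=-\frac{1}{2k-1}\left(\frac12\right)_k$ together with $\left(\frac12\right)_k/k!=\binom{2k}{k}/4^k$ gives
\begin{equation*}
\frac{\left(-\frac12\right)_k}{(1)_k}=\frac{-1}{2k-1}\,\frac{\binom{2k}{k}}{4^k},
\end{equation*}
so that the left-hand side of Conjecture \ref{Guo-Liu} equals $-\sum_{k=0}^{p^r-1}\frac{4k-1}{(2k-1)^3}\frac{\binom{2k}{k}^3}{(-64)^k}$. Hence it is enough to prove
\begin{equation*}
\sum_{k=0}^{p^r-1}\frac{4k-1}{(2k-1)^3}\,\frac{\binom{2k}{k}^3}{(-64)^k}\equiv(-1)^{(p-1)r/2}p^r\pmod{p^{r+2}}.
\end{equation*}
Remarkably this has \emph{exactly} the same right-hand side as the Guo sum settled in Theorem \ref{Thgjmaa}; the two sums differ only in their weight, $4k+1$ being replaced by $\frac{4k-1}{(2k-1)^3}$. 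I would therefore model the whole argument on the proof of Theorem \ref{Thgjmaa}, treating the present sum as its dual.

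Next I would apply the Wilf--Zeilberger method. Introducing an auxiliary variable $n$, I would use Gosper's algorithm to construct a certificate $G(n,k)$ forming a WZ pair with a normalized summand $F(n,k)$ (equal to $\frac{4k-1}{(2k-1)^3}\frac{\binom{2k}{k}^3}{(-64)^k}$ on the diagonal $n=p^r$), so that $F(n+1,k)-F(n,k)=G(n,k+1)-G(n,k)$. Summing $k$ from $0$ to $n-1$ telescopes the right-hand side, and specializing $n=p^r$ rewrites the target sum as a boundary term $G(p^r,p^r)-G(p^r,0)$ together with a tail that the pair is designed to control. The crucial point of using telescoping rather than a term-by-term estimate is that it should package the contributions of the upper half $(p^r+1)/2\le k\le p^r-1$ collectively, which is what makes the argument feasible.

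The main obstacle lies in precisely this upper range. Writing $k=p^r-j$ and using $\binom{2p^r-2k}{p^r-k}\equiv-2p^r/(k\binom{2k}{k})\pmod{p^2}$ from \cite{PS}, a generic upper term acquires a factor $p^{3r}$ from the cube of the central binomial and so is negligible modulo $p^{r+2}$. However, whenever $2k-1$ is divisible by a power of $p$ the denominator $(2k-1)^3$ cancels part of that factor, creating a hierarchy of near-singular terms; the extreme case $k=(p^r+1)/2$, where $2k-1=p^r$ and $v_p\binom{2k}{k}=r$, is an outright $p$-adic unit and cannot be discarded. Estimating these terms individually is hopeless, and it is exactly this tail that the WZ telescoping must absorb; verifying that it does so is the heart of the proof. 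For the surviving leading contribution I would then invoke Sun's evaluation $\sum_{k=0}^{(p-3)/2}\frac{\binom{2k}{k}}{(2k+1)4^k}\equiv-(-1)^{(p-1)/2}q_p(2)\pmod{p^2}$ from \cite{sun-jnt-2011}, just as in Theorem \ref{Thgjmaa}, and check that the Fermat-quotient contributions it introduces cancel in the final combination while the Euler-number correction already visible in \eqref{glp4} enters only from level $p^{r+2}$ and hence vanishes modulo $p^{r+2}$, leaving precisely $(-1)^{(p-1)r/2}p^r$. The two genuinely delicate points are the explicit determination of the certificate and the proof that the telescoped tail accounts exactly for the near-singular upper-range terms.
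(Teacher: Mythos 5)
Your reduction to central binomial coefficients is correct, and you have correctly located the landmarks: the near-singular indices with $p\mid 2k-1$ (above all $k=(p^r+1)/2$), the Pan--Sun congruence \eqref{dao2kk}, Sun's evaluation in Lemma \ref{sun1}, and the fact that the Euler-number term of \eqref{glp4} is invisible modulo $p^{r+2}$. But what you have written is a plan, not a proof: the two steps you yourself flag as ``genuinely delicate'' --- exhibiting the WZ certificate and evaluating the resulting boundary terms --- constitute essentially the entire content of the argument, and you leave both undone. The creative step in the WZ method is not running Gosper's algorithm but choosing the two-variable hypergeometric extension $F(n,k)$ of a one-variable summand; the paper's choice is the explicit pair \eqref{12F}--\eqref{12G}, built from $\left(-\tfrac12\right)_{n+k}$ and $\left(-\tfrac12\right)_k^2$, satisfying $F(n,k-1)-F(n,k)=G(n+1,k)-G(n,k)$. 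Note also that this telescopes in $n$ (summing over $0\le n\le p^r-1$ and then over $1\le k\le p^r-1$ to reach \eqref{wz3}), not in $k$ as in your sketch, so the boundary contribution is $F(p^r-1,p^r-1)+\sum_{k=1}^{p^r-1}G(p^r,k)$ rather than $G(p^r,p^r)-G(p^r,0)$. Without an explicit pair there is nothing to compute.

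Moreover, your hope that the telescoping ``packages the upper half collectively'' is not how the cancellation actually occurs: in the paper the sum $\sum_{k=1}^{p^r-1}G(p^r,k)$ is still estimated range by range (Lemmas \ref{theta1}--\ref{theta3}). The range $1\le k\le(p^r-1)/2$ dies by \eqref{2kk} and \eqref{dao2kk}; the single index $k=(p^r+1)/2$ yields the main term $-(-1)^{(p^r-1)/2}p^r(1-3pq_p(2))$ via Long's congruence \eqref{pr12pr} together with \eqref{2prprp2} and \eqref{2pr12pr}; and the range $(p^r+3)/2\le k\le p^r-1$, after the substitution $k\mapsto p^r-k$ and isolating the indices with $p^{r-1}\mid 2k+1$, reduces by \eqref{2jpr} to Lemma \ref{sun1} and contributes exactly $-3(-1)^{(p^r-1)/2}p^{r+1}q_p(2)$, cancelling the Fermat quotient from the middle term. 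None of these evaluations is a consequence of the telescoping identity itself. Finally, you give no treatment of $r=1$: the vanishing of the diagonal term (Lemma \ref{F2prpr}) requires $r>1$, and the case $r=1$ must be imported from \eqref{glp4}. As it stands, the proposal identifies the right ingredients but establishes none of the congruences that make the proof work.
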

Guo has proved this Conjecture by founding its $q$-analogy, (see \cite{guo-rama-2019}). Here we also give a new proof of this conjecture by the WZ method which differs from Guo's method. Now we list our second result.
\begin{thm}\label{Thglarxiv2}
Conjecture \ref{Guo-Liu} is true.
\end{thm}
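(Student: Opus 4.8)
The plan is to convert the alternating Pochhammer sum into a central-binomial-coefficient sum and then exploit its structural parallel with Theorem~\ref{Thgjmaa} (Conjecture~\ref{Guo-2018}). First I would use the elementary identities $\left(-\frac12\right)_k=-\frac{1}{2k-1}\left(\frac12\right)_k$ and $\left(\frac12\right)_k/k!=\binom{2k}{k}/4^k$ to rewrite the summand, obtaining
\begin{equation*}
\sum_{k=0}^{p^r-1}(-1)^k(4k-1)\frac{\left(-\frac12\right)_k^3}{(1)_k^3}
=-\sum_{k=0}^{p^r-1}\frac{4k-1}{(2k-1)^3}\,\frac{\binom{2k}{k}^3}{(-64)^k}.
\end{equation*}
Hence Conjecture~\ref{Guo-Liu} is equivalent to $\sum_{k=0}^{p^r-1}\frac{4k-1}{(2k-1)^3}\binom{2k}{k}^3(-64)^{-k}\equiv(-1)^{(p-1)r/2}p^r\pmod{p^{r+2}}$, and since Theorem~\ref{Thgjmaa} already supplies $\sum_{k=0}^{p^r-1}(4k+1)\binom{2k}{k}^3(-64)^{-k}\equiv(-1)^{(p-1)r/2}p^r\pmod{p^{r+2}}$, it is enough to prove that the difference sum
\begin{equation*}
D:=\sum_{k=0}^{p^r-1}\Big[(4k+1)-\frac{4k-1}{(2k-1)^3}\Big]\frac{\binom{2k}{k}^3}{(-64)^k}
\end{equation*}
is $\equiv0\pmod{p^{r+2}}$.

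The key algebraic point is that the bracket collapses: a short computation gives $(4k+1)-\frac{4k-1}{(2k-1)^3}=\frac{2k\,Q(k)}{(2k-1)^3}$ for some $Q\in\mathbb{Z}[k]$, so the $k=0$ term drops and every surviving term carries the factor $2k$. Invoking $k\binom{2k}{k}=2(2k-1)\binom{2k-2}{k-1}$ to absorb one power of $2k-1$, the summand of $D$ becomes $4Q(k)(2k-1)^{-2}\binom{2k-2}{k-1}\binom{2k}{k}^2(-64)^{-k}$, with only $(2k-1)^2$ left in the denominator. I would then split $D$ at $k=(p^r-1)/2$. In the upper range I substitute $k\mapsto p^r-k$ and apply the congruence $-2p^r/(k\binom{2k}{k})\equiv\binom{2p^r-2k}{p^r-k}\pmod{p^2}$ of \cite{PS}, which forces the upper terms to be divisible by a high power of $p$ and pairs them against the lower range; the leftover lower-range quantity is then evaluated modulo $p^{r+2}$ by Sun's congruence $\sum_{k=0}^{(p-3)/2}\binom{2k}{k}(2k+1)^{-1}4^{-k}\equiv-(-1)^{(p-1)/2}q_p(2)\pmod{p^2}$ of \cite{sun-jnt-2011}, exactly as in the proof of Theorem~\ref{Thgjmaa}. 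Equivalently, one may bypass the reduction and attach a WZ pair $(F(n,k),G(n,k))$ directly to the original summand, sum the telescoping relation and specialize $n$; this yields the same upper-half/lower-half decomposition.

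The main obstacle is holding the full modulus $p^{r+2}$ while the Euler-number corrections cancel. Indeed, the half-range identity (\ref{glp4}) of \cite{gl-arxiv-2019} carries a correction term $p^3(2-E_{p-3})$ at the $r=1$ level, whereas the target congruence mod $p^{r+2}$ carries no such term; thus the crux is to show that the upper-half contribution exactly annihilates the $E_{p-3}$-correction of the lower half, leaving only $-(-1)^{(p-1)r/2}p^r$. The tightest bookkeeping occurs for those $k$ with $p\mid(2k-1)$, where $(2k-1)^2$ has positive $p$-valuation and one must verify through Kummer's theorem on the base-$p$ carries of $k+k$ that $\binom{2k-2}{k-1}\binom{2k}{k}$ supplies the compensating factors of $p$; near $k\approx p^r/2$ this, together with correctly tracking the sign $(-1)^{(p-1)r/2}$ across the range, is where the argument is most delicate. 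I expect the WZ certificate combined with the \cite{PS} and \cite{sun-jnt-2011} inputs to be precisely strong enough to close the gap, just as for Conjecture~\ref{Guo-2018}.
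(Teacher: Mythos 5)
Your reduction of the Pochhammer sum to $-\sum_{k=0}^{p^r-1}\frac{4k-1}{(2k-1)^3}\binom{2k}{k}^3(-64)^{-k}$ is correct, and the bracket in $D$ does collapse to $\frac{2k(16k^3-20k^2+6k-1)}{(2k-1)^3}$ so that the $k=0$ term drops. But the central claim --- that this factorization, together with $k\binom{2k}{k}=2(2k-1)\binom{2k-2}{k-1}$, puts $D$ within reach of the congruences of \cite{PS} and \cite{sun-jnt-2011} ``exactly as in the proof of Theorem \ref{Thgjmaa}'' --- has a genuine gap. At the critical indices $k$ with $2k-1=p^j$ (or $mp^j$), the factor $2k$ is a $p$-adic unit, and after absorbing one power of $2k-1$ you are left with $\binom{2k-2}{k-1}\binom{2k}{k}^2/(2k-1)^2$; by Kummer's theorem $\nu_p\bigl(\binom{2k}{k}\bigr)=j$ and $\nu_p\bigl(\binom{2k-2}{k-1}\bigr)=0$ there, so this quotient is exactly a $p$-adic unit. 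Hence $D$ contains terms of $p$-adic valuation zero, and proving $D\equiv0\pmod{p^{r+2}}$ requires exhibiting cancellation to order $p^{r+2}$ among unit-sized terms --- which is not easier than the original conjecture and is not delivered by the two quoted congruences (Sun's congruence is only mod $p^2$ and concerns one specific lower-range sum). In other words, the reduction to Theorem \ref{Thgjmaa} transfers, rather than removes, all of the difficulty.

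The paper's actual route is the one you mention only in passing: it attaches a WZ pair adapted to the $\left(-\frac12\right)_k$ structure, namely $F(n,k)$ and $G(n,k)$ of \eqref{12F}--\eqref{12G}, telescopes to $\sum_n F(n,0)=F(p^r-1,p^r-1)+\sum_{k=1}^{p^r-1}G(p^r,k)$, and then splits the $G$-sum into three pieces: $1\le k\le(p^r-1)/2$ (which vanishes mod $p^{r+2}$ by \eqref{2kk} and \eqref{dao2kk}), the single term $k=(p^r+1)/2$ (which carries the entire main term $-(-1)^{(p^r-1)/2}p^r(1-3pq_p(2))$, evaluated via Long's congruence \eqref{pr12pr} and Wolstenholme-type estimates), and the tail $k>(p^r+1)/2$ (which contributes $-(-1)^{(p^r-1)/2}3p^{r+1}q_p(2)$ via the substitution $k\mapsto p^r-k$, \eqref{2ll2kk}, and Sun's congruence). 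The two Fermat-quotient corrections cancel; note that no Euler-number bookkeeping arises for $r>1$ --- the $E_{p-3}$ term of \eqref{glp4} is only relevant to the $r=1$ case, which is quoted as a black box --- so the cancellation you flag as the crux is not the one that actually occurs. You have correctly identified the external inputs and sensed that the indices with $p\mid(2k-1)$ are where the action is, but the proposed path through $D$ does not close, and the decisive computations (the exact evaluation of the critical term and of the tail) are not carried out.
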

Via an identity in \cite[Lemma 2.2]{mao-rama-2018}, we generalize congruence $(I.2)$ of van Hamme which also can be found in \cite{swisher}.
\begin{thm}\label{swisher}
\begin{equation}\label{gvan}
\sum_{n=0}^{(p-1)/2}\frac{\left(\frac12\right)_n^2}{(n+1)n!^2}\equiv2p^2+2p^3(2q_p(2)-1)\pmod{p^4},
\end{equation}
where $q_p(2)$ denotes the Fermat quotient $(2^{p-1}-1)/p$.
\end{thm}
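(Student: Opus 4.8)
The plan is to collapse the whole sum to a single term by means of a closed-form evaluation of its partial sums, and then to expand that term modulo $p^4$. Writing $\left(\frac12\right)_n/n!=\binom{2n}{n}/4^n$, the summand equals $\binom{2n}{n}^2/((n+1)16^n)$, and its term ratio simplifies to the rational function $(2n+1)^2/(4(n+1)(n+2))$; hence the series is Gosper-summable. The relevant evaluation, supplied by the identity of \cite[Lemma 2.2]{mao-rama-2018} (and in any case confirmable by Gosper's algorithm or a one-line induction), is
\begin{equation*}
\sum_{n=0}^{m}\frac{\left(\frac12\right)_n^2}{(n+1)n!^2}=\frac{(2m+1)^2}{m+1}\cdot\frac{\left(\frac12\right)_m^2}{m!^2},\qquad m\in\mathbb{N}.
\end{equation*}

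Next I would specialize to $m=(p-1)/2$, so that $2m+1=p$ and $m+1=(p+1)/2$, giving
\begin{equation*}
\sum_{n=0}^{(p-1)/2}\frac{\left(\frac12\right)_n^2}{(n+1)n!^2}=\frac{2p^2}{p+1}\cdot\frac{\left(\frac12\right)_{(p-1)/2}^2}{((p-1)/2)!^2}.
\end{equation*}
Because the prefactor already contains $p^2$, it suffices to understand the trailing quotient modulo $p^2$, together with $(p+1)^{-1}\equiv1-p\pmod{p^2}$, in order to pin down the sum modulo $p^4$.

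To evaluate the quotient modulo $p^2$ I would use $\left(\frac12\right)_{(p-1)/2}/((p-1)/2)!=\binom{p-1}{(p-1)/2}/2^{p-1}$ and invoke Morley's congruence $\binom{p-1}{(p-1)/2}\equiv(-1)^{(p-1)/2}4^{p-1}\pmod{p^2}$, which holds for every odd prime. Squaring and cancelling $2^{2(p-1)}$ yields
\begin{equation*}
\frac{\left(\frac12\right)_{(p-1)/2}^2}{((p-1)/2)!^2}\equiv 2^{2(p-1)}=(1+pq_p(2))^2\equiv1+2pq_p(2)\pmod{p^2},
\end{equation*}
where I used $2^{p-1}=1+pq_p(2)$. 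Feeding this and $(p+1)^{-1}\equiv1-p$ back into the reduction and expanding modulo $p^4$ gives
\begin{equation*}
2p^2(1-p)(1+2pq_p(2))\equiv2p^2+2p^3(2q_p(2)-1)\pmod{p^4},
\end{equation*}
which is exactly \eqref{gvan}.

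I expect no real difficulty once the closed form is available: the final expansion is routine, and the only substantive ingredient is the mod-$p^2$ value of $\left(\frac12\right)_{(p-1)/2}^2/((p-1)/2)!^2$, which Morley's congruence and the Fermat-quotient expansion of $2^{p-1}$ deliver immediately. The single point to watch is that the sharper mod-$p^3$ Morley congruence requires $p\geq5$, so I would record that only the weaker mod-$p^2$ form is needed here and that it remains valid at $p=3$ (where the claim can also be verified directly).
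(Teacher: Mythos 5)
Your proposal is correct and follows essentially the same route as the paper: both collapse the sum via the closed-form evaluation of \cite[Lemma 2.2]{mao-rama-2018} (your stated identity is the same as the paper's $\binom{-3/2}{(p-1)/2}^2/((p+1)/2)$ after rewriting the Pochhammer symbols), extract the factor $p^2$, and then evaluate the remaining central-binomial ratio modulo $p^2$ using $2^{p-1}=1+pq_p(2)$. The only cosmetic difference is that you quote Morley's congruence modulo $p^2$ where the paper expands $\prod_{k}(1-p/(2k))^2\equiv1-pH_{(p-3)/2}$ and then applies $H_{(p-1)/2}\equiv-2q_p(2)\pmod p$; these are equivalent, and your remark that only the mod-$p^2$ form (valid for all odd $p$) is needed is a worthwhile precaution.
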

Our main tool is the WZ method. We shall prove Theorem \ref{Th10ngz} in Section 2, Theorems \ref{Thgjmaa} and \ref{Thglarxiv2} will be proved in Sections 3 and 4, respectively. And Theorem \ref{Th20n3} will be proved in Section 5. The last Section is devoted to prove Theorem \ref{swisher}.
\section{Proof of Theorem \ref{Th10ngz}}
First we have the following WZ pair (about the WZ method, see, for instance, \cite{AZ, PWZ, Z}) in \cite{GZ-pams-2012}
$$
F(n,k)=(10n^2+12nk+6n+4k^2+4k+1)\frac{\left(\frac12\right)_n\left(\frac12+k\right)^4_n}{(1)^5_n}(-1)^n2^{2n}
$$
and
$$
G(n,k)=(n+2k-1)\frac{\left(\frac12\right)_n\left(\frac12+k\right)^4_{n-1}}{(1)^5_{n-1}}(-1)^n2^{2n+1}.
$$
It is easy to check that
\begin{align}\label{FG123}
F(n,k-1)-F(n,k)=G(n+1,k)-G(n,k).
\end{align}
Summing up the above equation for $n$ from $0$ to $(p^r-1)/2$, and then for $k$ from $1$ to $(p^r-1)/2$, we get
\begin{equation}\label{wzfin}
\sum_{n=0}^{(p^r-1)/2}F(n,0)=\sum_{n=0}^{(p^r-1)/2}F(n,(p^r-1)/2)+\sum_{k=1}^{(p^r-1)/2}G((p^r+1)/2,k).
\end{equation}
\begin{lem}\label{Lemwzfin1} For $\delta\in\{1,2\}$, we have
$$
\sum_{n=0}^{(p^r-1)/\delta}F(n,(p^r-1)/2)\equiv p^{2r} \pmod{p^{2r+3}}.
$$
\end{lem}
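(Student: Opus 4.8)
The plan is to evaluate the quantity $F(n,(p^r-1)/2)$ explicitly using the definition of the WZ pair, and to show that essentially only the $n=0$ term survives modulo the required power of $p$. Writing $m=(p^r-1)/2$, we have
\[
F(n,m)=(10n^2+12nm+6n+4m^2+4m+1)\frac{\left(\tfrac12\right)_n\left(\tfrac12+m\right)^4_n}{(1)^5_n}(-1)^n2^{2n}.
\]
First I would simplify the polynomial prefactor: since $4m^2+4m+1=(2m+1)^2=p^{2r}$ and $12nm=6n(p^r-1)$, the prefactor becomes $10n^2+6np^r+1+p^{2r}$, which is $\equiv 1+6np^r \pmod{p^{2r}}$ up to the $10n^2$ term. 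The key structural point is that $\tfrac12+m=\tfrac12+(p^r-1)/2=p^r/2$, so the Pochhammer factor $\left(\tfrac12+m\right)_n=\left(p^r/2\right)_n=\frac{p^r}{2}\cdot\left(\frac{p^r}{2}+1\right)\cdots$ carries an explicit factor of $p^r$ for every $n\geq 1$, and it is raised to the fourth power.

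The central observation is therefore a $p$-adic valuation count. For $n\geq 1$, the factor $\left(p^r/2\right)_n^4$ contributes $p^{4r}$ from its leading factor $(p^r/2)^4$, so every term with $n\geq 1$ is divisible by $p^{4r}$, which far exceeds $p^{2r+3}$ once $r\geq 1$. Hence I would argue that
\[
\sum_{n=0}^{m}F(n,m)\equiv F(0,m)\pmod{p^{2r+3}},
\]
and evaluate $F(0,m)=(4m^2+4m+1)=(2m+1)^2=p^{2r}$ directly, since at $n=0$ all Pochhammer symbols reduce to $1$ and the power $(-1)^0 2^0=1$. This immediately gives the claimed congruence $p^{2r}\pmod{p^{2r+3}}$. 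The extension to the smaller summation range $\delta=1$ versus $\delta=2$ is harmless here: the extra terms indexed by $n$ between $(p^r-1)/2$ and $p^r-1$ (when $\delta=1$) again each carry the factor $\left(p^r/2\right)_n^4$ and are divisible by $p^{4r}$, so they do not affect the residue.

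The main obstacle I anticipate is being careful about the $p$-adic valuation of the \emph{denominator} $(1)_n^5=n!^5$ and of the binomial-type factors, to confirm that no cancellation reduces the $p^{4r}$ gain from the numerator. Specifically, for indices $n$ near $p^r/2$ or larger, $n!$ itself is divisible by high powers of $p$, and one must verify that the net valuation of $\frac{\left(\tfrac12+m\right)_n^4}{n!^5}$ (together with $\left(\tfrac12\right)_n$ in the numerator) stays at least $2r+3$. The cleanest route is to bound $v_p\!\left(\left(p^r/2\right)_n\right)\geq r$ for each $n\geq 1$ (the leading factor alone gives this) and separately control $v_p(n!)$ via Legendre's formula, checking that $4r$ minus the denominator deficit never drops below $2r+3$; I would handle the ranges $n<p^r$ and the boundary terms uniformly by this valuation estimate, which is the technical heart of the lemma.
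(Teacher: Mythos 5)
Your evaluation of the $n=0$ term is correct, and your reduction of the prefactor to $10n^2+6np^r+p^{2r}$ is the right first step. However, the central claim of your proposal --- that every term with $n\geq1$ is divisible by $p^{2r+3}$ so that only $F(0,m)$ survives --- is false, and the valuation check you defer to at the end cannot be made to work. After clearing the Pochhammer symbols, the $n$-th term equals
\begin{equation*}
\frac{p^{4r}}{16}\,(10n^2+6np^r+p^{2r})\,\frac{(-1)^n\binom{2n}{n}}{n^4}\binom{p^r/2+n-1}{n-1}^{4},
\end{equation*}
where the last binomial factor is a $p$-adic unit $\equiv1\pmod p$. The dominant piece is $10p^{4r}/n^{2}$, whose $p$-adic valuation is $4r-2v_p(n)$; for $n=p^{r-1}k$ with $p\nmid k$ this is exactly $2r+2$, one short of the required $2r+3$. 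Already for $r=1$ every term $1\leq n\leq p-1$ has valuation exactly $4$ while the modulus is $p^{5}$, so no term-by-term estimate (via Legendre's formula or otherwise) can close the gap: the statement you would need, namely that each individual term vanishes modulo $p^{2r+3}$, is simply not true.

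The missing idea is that one must exploit cancellation \emph{across} the sum. The valuation count does show that only the indices with $v_p(n)=r-1$ matter modulo $p^{2r+3}$; writing $n=p^{r-1}k$ with $1\leq k\leq (p-1)/\delta$ and applying the Lucas congruence $\binom{2p^{r-1}k}{p^{r-1}k}\equiv\binom{2k}{k}\pmod p$, the residual contribution becomes
\begin{equation*}
\frac{5p^{2r+2}}{8}\sum_{k=1}^{(p-1)/\delta}\frac{(-1)^k\binom{2k}{k}}{k^2}\pmod{p^{2r+3}},
\end{equation*}
and the proof is completed only by invoking the known congruence $\sum_{k=1}^{(p-1)/\delta}(-1)^k\binom{2k}{k}/k^2\equiv0\pmod p$ (equation (14) of Guillera--Zudilin), which supplies the final factor of $p$. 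This arithmetic input is the actual heart of the lemma and is absent from your proposal.
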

\begin{proof}
By the definition of $F(n,k)$, we have
$$F(0,(p^r-1)/2)=p^{2r}$$
and
\begin{align*}
\sum_{n=1}^{(p^r-1)/\delta}F(n,(p^r-1)/2)&=\sum_{n=1}^{(p^r-1)/\delta}(10n^2+6np^r+p^{2r})\frac{\left(\frac12\right)_n\left(\frac{p^r}2\right)^4_n}{(1)^5_n}(-4)^n\\
&=\frac{p^{4r}}{16}\sum_{n=1}^{(p^r-1)/\delta}(10n^2+6np^r+p^{2r})\frac{(-1)^n\binom{2n}n}{n^4}\binom{p^r/2+n-1}{n-1}^4.
\end{align*}
It is easy to see that $p^{5r}/n^3\equiv p^{6r}/n^4\equiv0\pmod{p^{2r+3}}$ for each $1\leq n\leq(p^r-1)/\delta$. So
\begin{align*}
\sum_{n=1}^{(p^r-1)/\delta}F(n,(p^r-1)/2)&\equiv\frac{5p^{4r}}{8}\sum_{n=1}^{(p^r-1)/\delta}\frac{(-1)^n\binom{2n}n}{n^2}\binom{p^r/2+n-1}{n-1}^4\\
&\equiv\frac{5p^{4r}}{8}\sum_{n=1}^{(p^r-1)/\delta}\frac{(-1)^n\binom{2n}n}{n^2}\equiv\frac{5p^{2r+2}}{8}\sum_{k=1}^{(p-1)/\delta}\frac{(-1)^k\binom{2p^{r-1}k}{p^{r-1}k}}{k^2}\\
&\equiv\frac{5p^{2r+2}}{8}\sum_{k=1}^{(p-1)/\delta}\frac{(-1)^k\binom{2k}{k}}{k^2}\pmod{p^{2r+3}}
\end{align*}
with $p^{4r}/n^2\equiv0\pmod{p^{2r+2}}$, $\binom{p^r/2+n-1}{n-1}\equiv 1\pmod p$ and Lucas congruence.

Therefore we complete the proof of Lemma \ref{Lemwzfin1} with \cite[(14)]{GZ-pams-2012}.
\end{proof}
\begin{lem}\label{Lemwzfin2}
$$\sum_{k=1}^{(p^r-1)/2}G((p^r+1)/2,k)\equiv0\pmod{p^{r+4}}.$$
\end{lem}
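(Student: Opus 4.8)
The plan is to prove the stronger, termwise statement that $G((p^r+1)/2,k)\equiv0\pmod{p^{r+4}}$ for every $1\le k\le(p^r-1)/2$, so that the congruence for the sum follows at once. Write $M=(p^r-1)/2$. Since the last factor of the Pochhammer product is $\tfrac12+M=\tfrac{p^r}2$, I would first record the clean factorization $\left(\frac12\right)_{(p^r+1)/2}=\frac{p^r}2\left(\frac12\right)_M$, which exhibits one explicit power $p^r$. Evaluating $G$ at $n=(p^r+1)/2$ and collecting the $k$-independent constants, the term becomes a $p$-unit (the power of $2$) times the linear factor $\tfrac{p^r-1}2+2k$ times $p^r\left(\frac12\right)_M\left(\frac12+k\right)_M^4/(M!)^5$.

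Next I would set up the $p$-adic bookkeeping. A short computation with Legendre's formula gives $v_p\!\left(\left(\frac12\right)_M\right)=v_p(M!)=\alpha_r$, where $\alpha_r=\sum_{i\ge1}\lfloor M/p^i\rfloor$; these two valuations cancel against the five factorials in the denominator to leave a net contribution $-4\alpha_r$. Hence
\[
v_p\!\big(G((p^r+1)/2,k)\big)\ \ge\ r-4\alpha_r+4\,v_p\!\left(\left(\tfrac12+k\right)_M\right),
\]
and the whole lemma reduces to the single uniform bound $v_p\!\left(\left(\tfrac12+k\right)_M\right)\ge\alpha_r+1$ for all $1\le k\le M$, since then the right-hand side is at least $r-4\alpha_r+4(\alpha_r+1)=r+4$.

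To prove that bound I would count multiplicities directly. The Pochhammer $\left(\tfrac12+k\right)_M=2^{-M}\prod_{j=0}^{M-1}(2k+2j+1)$ is, up to a $p$-unit, the product of the $M$ consecutive odd integers $2k+1,2k+3,\dots,2k+p^r-2$, so $v_p\!\left(\left(\tfrac12+k\right)_M\right)=\sum_{i\ge1}N_i$, where $N_i$ is the number of these odd integers divisible by $p^i$. Among $M$ consecutive odd integers the multiples of $p^i$ occupy a single residue class of the running index, whence $N_i\ge\lfloor M/p^i\rfloor$; summing over $i$ recovers $\sum_i\lfloor M/p^i\rfloor=v_p(M!)=\alpha_r$. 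The extra unit comes from the top power: because $1\le k\le M$ forces $2k+1\le p^r\le 2k+p^r-2$, the odd number $p^r$ itself always lies in the window, so $N_r=1$ even though $\lfloor M/p^r\rfloor=0$. This surplus raises the total to $v_p\!\left(\left(\tfrac12+k\right)_M\right)\ge\alpha_r+1$, completing the estimate.

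The one genuinely delicate point, and the step I expect to be the main obstacle, is precisely this surplus $N_r=1$: the naive Legendre/floor count yields only $v_p\ge\alpha_r$, which after the fourth power falls short of the target by exactly $p^4$, so the argument hinges on observing that the single odd multiple $p^r$ is unconditionally trapped in the shifted interval $[2k+1,2k+p^r-2]$. Once this is in hand the fourth power converts the extra $p$ into the needed $p^4$, which together with the explicit $p^r$ yields $p^{r+4}$ term by term; I would also note in passing that the linear factor $\tfrac{p^r-1}2+2k$ only ever helps, so it may be discarded in the lower bound.
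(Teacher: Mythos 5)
Your proof is correct, and it takes a genuinely different route from the one in the paper. The paper's argument rewrites the Pochhammer quotients as binomial coefficients, extracts an explicit factor $p^{3r}$, shows $\mathrm{ord}_p\binom{p^r+2k-1}{(p^r-1)/2+k}\ge 1$ via Legendre's formula, and then spends $p^{2r}$ against the $\binom{2k}k^2$ in the denominator using the Pan--Sun congruences \eqref{2kk} and \eqref{dao2kk} to recover two more factors of $p$; the tally $p^r\cdot p^2\cdot p^2$ gives $p^{r+4}$ term by term. You reach the same termwise conclusion $v_p\bigl(G((p^r+1)/2,k)\bigr)\ge r+4$ by a direct valuation count on the Pochhammer symbols themselves, and every step checks out: $\left(\frac12\right)_{(p^r+1)/2}=\frac{p^r}{2}\left(\frac12\right)_M$ with $v_p\bigl(\left(\frac12\right)_M\bigr)=v_p(M!)$ follows from Legendre's formula since $\lfloor (p^r-1)/p^i\rfloor=2\lfloor M/p^i\rfloor$ for $i\le r$; the count $N_i\ge\lfloor M/p^i\rfloor$ holds because the indices $j$ with $p^i\mid 2k+2j+1$ form one residue class modulo $p^i$; and the window containment $2k+1\le p^r\le 2k+p^r-2$ is exactly the condition $1\le k\le M$, so the surplus $N_r=1$ is available for every $k$ in the range, which is indeed the crux (without it the fourth power would only deliver $p^{4\alpha_r}$ and the bound would stop at $p^r$). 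What your approach buys is self-containedness and transparency: no binomial-coefficient transformations and no appeal to the external results \eqref{2ll2kk}--\eqref{dao2kk} are needed, and the source of each power of $p$ is visible. What the paper's approach buys is uniformity: the same congruences and binomial manipulations are reused almost verbatim in Lemma \ref{Lemwzfin3} and in the later sections, so the author pays the setup cost once. Either argument proves the lemma.
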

\begin{proof}
By the definition of $G(n,k)$ we have
\begin{align*}
&\sum_{k=1}^{(p^r-1)/2}G((p^r+1)/2,k)=\sum_{k=1}^{(p^r-1)/2}(p^r-1+2k)\frac{\left(\frac12\right)_{(p^r+1)/2}\left(\frac12+k\right)^4_{(p^r-1)/2}}{(1)^5_{(p^r-1)/2}}(-4)^{(p^r+1)/2}\\
&=-2p^r\binom{p^r-1}{(p^r-1)/2}(-1)^{(p^r-1)/2}\sum_{k=1}^{(p^r-1)/2}(p^r-1+2k)\frac{\left(\frac12+k\right)^4_{(p^r-1)/2}}{(1)^4_{(p^r-1)/2}}\\
&=-2p^r\binom{p^r-1}{(p^r-1)/2}^3(-1)^{(p^r-1)/2}\sum_{k=1}^{(p^r-1)/2}(p^r-1+2k)\frac{\binom{p^r+2k-1}{(p^r-1)/2+k}^2\binom{p^r+2k-1}{2k}^2}{\binom{2k}k^2}\\
&=-\frac{p^{3r}}2\binom{p^r-1}{(p^r-1)/2}^3(-1)^{(p^r-1)/2}\sum_{k=1}^{(p^r-1)/2}(p^r-1+2k)\frac{\binom{p^r+2k-1}{(p^r-1)/2+k}^2\binom{p^r+2k-1}{2k-1}^2}{\binom{2k}k^2}.
\end{align*}
It is easy to see that
\begin{align*}
ord_p\left(\binom{p^r+2k-1}{(p^r-1)/2+k}\right)&=\sum_{j=1}^{\infty}\left(\left(\left\lfloor\frac{p^r-1+2k}{p^j}\right\rfloor\right)-2\left(\left\lfloor\frac{p^r-1+2k}{2p^j}\right\rfloor\right)\right)\\
&=\sum_{j=1}^{r}\left(\left(\left\lfloor\frac{p^r-1+2k}{p^j}\right\rfloor\right)-2\left(\left\lfloor\frac{p^r-1+2k}{2p^j}\right\rfloor\right)\right)\geq1
\end{align*}
since $\left(\left\lfloor\frac{p^r-1+2k}{p^r}\right\rfloor\right)-2\left(\left\lfloor\frac{p^r-1+2k}{2p^r}\right\rfloor\right)=1$ and for any real numbers $x$,
$$\lfloor2x\rfloor\geq 2\lfloor x\rfloor.$$
In view of the paper \cite{PS}, let $k$ and $l$ be positive integers with $k+l=p^r$ and $0<l<p^r/2$, we have
\begin{align}\label{2ll2kk}
l\binom{2l}l\binom{2k}k\equiv-2p^r\pmod{p^{r+1}},
\end{align}
\begin{align}\label{2kk}
\binom{2k}k\equiv0\pmod{p}
\end{align}
and
\begin{align}\label{dao2kk}
\frac{-2p^r}{l\binom{2l}l}\equiv\binom{2k}k\pmod{p^{2}}.
\end{align}
Hence with (\ref{dao2kk}) and (\ref{2kk}) we immediately obtain that
$$
\sum_{k=1}^{(p^r-1)/2}G((p^r+1)/2,k)\equiv0\pmod{p^{r+4}}.
$$
So we finish the proof of Lemma \ref{Lemwzfin2}.
\end{proof}
\noindent{\rm Case 1}. $\delta=2$. Substituting Lemmas \ref{Lemwzfin1} and \ref{Lemwzfin2} into (\ref{wzfin}), we immediately get the desired result.

 Summing up equation (\ref{FG123}) for $n$ from $0$ to $p^r-1$, and then for summing up $k$ from $1$ to $(p^r-1)/2$, we get
\begin{equation}\label{wzfin1}
\sum_{n=0}^{p^r-1}F(n,0)=\sum_{n=0}^{p^r-1}F(n,(p^r-1)/2)+\sum_{k=1}^{(p^r-1)/2}G(p^r,k).
\end{equation}
\begin{lem}\label{Lemwzfin3}
$$\sum_{k=1}^{(p^r-1)/2}G(p^r,k)\equiv0\pmod{p^{r+4}}.$$
\end{lem}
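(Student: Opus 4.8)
The plan is to follow the same strategy as in the proof of Lemma \ref{Lemwzfin2}, now specializing $n=p^r$ in the definition of $G(n,k)$, rewriting everything in terms of central binomial coefficients, and then reading off the $p$-adic valuations from Kummer's theorem. Since $p$ is odd, the factor $(-1)^{p^r}=-1$ together with every power of $2$ is a unit modulo $p^{r+4}$, so I may discard them throughout and track only the powers of $p$.

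First I would convert the Pochhammer symbols. Using $\left(\frac12\right)_m=(2m)!/(4^mm!)$, a short computation gives
\begin{equation*}
\frac{\left(\frac12\right)_{p^r}}{(p^r-1)!}=\frac{p^r\binom{2p^r}{p^r}}{4^{p^r}},\qquad
\frac{\left(\frac12+k\right)_{p^r-1}}{(p^r-1)!}=\frac{\binom{2k+2p^r-2}{k+p^r-1}\binom{k+p^r-1}{k}}{4^{p^r-1}\binom{2k}{k}},
\end{equation*}
so that, up to a unit,
\begin{equation*}
G(p^r,k)\sim(p^r+2k-1)\,p^r\binom{2p^r}{p^r}\frac{\binom{2k+2p^r-2}{k+p^r-1}^4\binom{k+p^r-1}{k}^4}{\binom{2k}{k}^4}.
\end{equation*}

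Next I would compute the valuation of each factor. Since $1\le 2k-1\le p^r-2$ we have $ord_p(p^r+2k-1)=ord_p(2k-1)\le r-1$; Kummer's theorem gives $ord_p\binom{2p^r}{p^r}=0$, and a base-$p$ digit count (writing $k+p^r-1=p^r+(k-1)$ and noting that $p^r-1$ has all $r$ digits equal to $p-1$) yields
\begin{equation*}
ord_p\binom{2k+2p^r-2}{k+p^r-1}=ord_p\binom{2k-2}{k-1},\qquad ord_p\binom{k+p^r-1}{k}=r-ord_p(k).
\end{equation*}
The key is the elementary identity $\binom{2k}{k}=\frac{2(2k-1)}{k}\binom{2k-2}{k-1}$, which (as $p$ is odd) gives $ord_p\binom{2k}{k}=ord_p\binom{2k-2}{k-1}+ord_p(2k-1)-ord_p(k)$, so the central binomials in numerator and denominator cancel. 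Substituting everything leaves
\begin{align*}
ord_p\,G(p^r,k)&=ord_p(2k-1)+r+4\big(r-ord_p(k)\big)+4\,ord_p\binom{2k-2}{k-1}-4\,ord_p\binom{2k}{k}\\
&=5r-3\,ord_p(2k-1).
\end{align*}
As $ord_p(2k-1)\le r-1$, this is at least $5r-3(r-1)=2r+3\ge r+4$ for every $r\ge1$, so each summand already vanishes modulo $p^{r+4}$ and the conclusion follows.

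I expect the main obstacle to be the bookkeeping of the binomial valuations, in particular establishing $ord_p\binom{2k+2p^r-2}{k+p^r-1}=ord_p\binom{2k-2}{k-1}$ and $ord_p\binom{k+p^r-1}{k}=r-ord_p(k)$ from the base-$p$ digits, and checking that the $\binom{2k}{k}$ in the denominator is cancelled exactly by $\binom{2k-2}{k-1}^4$ in the numerator through the recurrence above. Once this is in place the estimate is term-by-term, and in fact yields the stronger statement that the sum vanishes modulo $p^{2r+3}$; unlike Lemma \ref{Lemwzfin2}, no appeal to the congruences (\ref{2ll2kk})--(\ref{dao2kk}) from \cite{PS} is needed here.
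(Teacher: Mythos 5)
Your proof is correct, and it reaches the conclusion by a genuinely different route at the decisive step. The paper performs the same initial reduction --- it rewrites $G(p^r,k)$, up to units, as $(p^r-1+2k)$ times $p^r\binom{2p^r-1}{p^r-1}$ times $\frac{p^{4r}\binom{2p^r+2k-2}{p^r-1+k}^4\binom{p^r+k-1}{k-1}^4}{4^{4(p^r-1)}k^4\binom{2k}{k}^4}$, which is exactly the closed form you obtain --- but it then disposes of the denominator $k^4\binom{2k}{k}^4$ by citing the Pan--Sun congruences (\ref{dao2kk}) and (\ref{2kk}): since $-2p^r/(k\binom{2k}{k})\equiv\binom{2p^r-2k}{p^r-k}\equiv0\pmod p$, the block $p^{4r}/(k^4\binom{2k}{k}^4)$ contributes at least $p^4$, and the leading $p^r$ gives $p^{r+4}$. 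You instead compute the exact $p$-adic valuation of every factor via Kummer's theorem together with the recurrence $\binom{2k}{k}=\frac{2(2k-1)}{k}\binom{2k-2}{k-1}$, arriving at $\mathrm{ord}_p\,G(p^r,k)=5r-3\,\mathrm{ord}_p(2k-1)\ge 2r+3$. All of your individual valuation claims check out: in particular, no carry propagates into the $p^r$ digit when doubling $p^r+k-1$ because $2(k-1)<p^r$ for $k\le(p^r-1)/2$, and $p>2$ keeps the resulting digit $1+1$ below $p$, so $\mathrm{ord}_p\binom{2p^r+2k-2}{p^r+k-1}=\mathrm{ord}_p\binom{2k-2}{k-1}$ exactly, and $\mathrm{ord}_p\binom{p^r+k-1}{k}=r-\mathrm{ord}_p(k)$ follows from $\binom{p^r+k-1}{k}=\frac{p^r}{k}\prod_{j=1}^{k-1}\frac{p^r+j}{j}$. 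Your argument is more elementary --- it needs nothing from \cite{PS} --- and it delivers the sharper modulus $p^{2r+3}$, which is precisely the strength relevant to the conjecture stated after Theorem \ref{Th10ngz}; the paper's version is shorter only because it reuses machinery already set up for Lemma \ref{Lemwzfin2}.
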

\begin{proof}
By the definition of $G(n,k)$ we have
\begin{align*}
\sum_{k=1}^{(p^r-1)/2}G(p^r,k)&=2\sum_{k=1}^{(p^r-1)/2}(p^r-1+2k)\frac{\left(\frac12\right)_{p^r}\left(\frac12+k\right)^4_{p^r-1}}{(1)^5_{p^r-1}}(-4)^{p^r}\\
&=-4p^r\binom{2p^r-1}{p^r-1}\sum_{k=1}^{(p^r-1)/2}(p^r-1+2k)\frac{\left(\frac12+k\right)^4_{p^r-1}}{(1)^4_{p^r-1}}\\
&=-4p^r\binom{2p^r-1}{p^r-1}\sum_{k=1}^{(p^r-1)/2}(p^r-1+2k)\frac{\binom{2p^r+2k-2}{p^r-1+k}^4\binom{p^r+k-1}{k}^4}{4^{4(p^r-1)}\binom{2k}k^4}\\
&=-4p^r\binom{2p^r-1}{p^r-1}\sum_{k=1}^{(p^r-1)/2}(p^r-1+2k)\frac{p^{4r}\binom{2p^r+2k-2}{p^r-1+k}^4\binom{p^r+k-1}{k-1}^4}{4^{4(p^r-1)}k^4\binom{2k}k^4}.
\end{align*}
Hence with (\ref{dao2kk}) and (\ref{2kk}) we immediately obtain that
$$
\sum_{k=1}^{(p^r-1)/2}G(p^r,k)\equiv0\pmod{p^{r+4}}.
$$
Now the proof of Lemma \ref{Lemwzfin3} is finished.
\end{proof}
\noindent{\rm Case 2}. $\delta=1$. Combining Lemmas \ref{Lemwzfin1} and \ref{Lemwzfin3} with (\ref{wzfin1}) we immediately obtain the result.

At this time, the proof of Theorem \ref{Th10ngz} is complete. \qed
\section{Proof of Theorem \ref{Thgjmaa}}
We will use the following WZ pair to prove Theorem \ref{Thgjmaa}. For nonnegative integers $n, k$, define
$$
F(n,k)=\frac{(-1)^{n+k}(4n+1)}{4^{3n-k}}\binom{2n}n^2\frac{\binom{2n+2k}{n+k}\binom{n+k}{2k}}{\binom{2k}k}
$$
and
$$
G(n,k)=\frac{(-1)^{n+k}(2n-1)^2\binom{2n-2}{n-1}^2}{2(n-k)4^{3(n-1)-k}}\binom{2(n-1+k)}{n-1+k}\frac{\binom{n-1+k}{2k}}{\binom{2k}k}.
$$
Clearly $F(n,k)=G(n,k)=0$ if $n<k$. It is easy to check that
\begin{equation}\label{FG}
F(n,k-1)-F(n,k)=G(n+1,k)-G(n,k)
\end{equation}
for all nonnegative integer $n$ and $k>0$.

We mentioned that Sun has proved the theorem for $r=1$, so we just need to show that for $r>1$.

Summing (\ref{FG}) over $n$ from $0$ to $p^r-1$ we have
$$
\sum_{n=0}^{p^r-1}F(n,k-1)-\sum_{n=0}^{p^r-1}F(n,k)=G(p^r,k)-G(0,k)=G(p^r,k).
$$
Furthermore, summing both side of the above identity over $k$ from $1$ to $p^r-1$, we obtain
\begin{align}\label{wz1}
\sum_{n=0}^{p^r-1}F(n,0)=F(p^r-1,p^r-1)+\sum_{k=1}^{p^r-1}G(p^r,k).
\end{align}
\begin{lem} \label{Fp1p1}
$$F(p^r-1,p^r-1)\equiv 0\pmod{p^{r+2}}.$$
\end{lem}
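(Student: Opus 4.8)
The plan is to evaluate $F(p^r-1,p^r-1)$ in closed form and then read off its $p$-adic valuation. Writing $N=p^r-1$ and substituting $n=k=N$ into the definition of $F$, the sign is $(-1)^{n+k}=(-1)^{2N}=1$, the factor $\binom{n+k}{2k}=\binom{2N}{2N}=1$, and one factor of $\binom{2N}{N}$ cancels against the denominator $\binom{2k}{k}=\binom{2N}{N}$, so that
$$
F(p^r-1,p^r-1)=\frac{4p^r-3}{4^{2(p^r-1)}}\binom{2p^r-2}{p^r-1}\binom{4p^r-4}{2p^r-2}.
$$
Since $p$ is odd, the power $4^{2(p^r-1)}$ is a unit modulo $p$, so it suffices to bound the $p$-adic valuation of the numerator.

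Next I would compute the valuations of the two central-type binomial coefficients. For the first, the identity $\binom{2p^r-2}{p^r-1}=\frac{p^r}{2p^r-1}\binom{2p^r-1}{p^r-1}$, together with Lucas' theorem (which gives $\binom{2p^r-1}{p^r-1}\equiv1\pmod p$, hence a unit), shows that $v_p\!\left(\binom{2p^r-2}{p^r-1}\right)=r$. The same value follows directly from Kummer's theorem by counting carries in the base-$p$ addition $(p^r-1)+(p^r-1)$, each of the $r$ digits $p-1$ producing one carry. For the second coefficient I would apply Kummer's theorem (equivalently Legendre's formula via digit sums) to the addition $(2p^r-2)+(2p^r-2)$ in base $p$; a short carry count, valid uniformly for every odd $p$, again yields exactly $r$ carries, so $v_p\!\left(\binom{4p^r-4}{2p^r-2}\right)=r$.

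Combining these, $v_p\big(F(p^r-1,p^r-1)\big)\ge r+r=2r$, with an extra $+1$ in the valuation when $p=3$ coming from $4p^r-3$, which we do not even need. Since Sun's result already covers $r=1$ and we are reduced to $r\ge2$, the inequality $2r\ge r+2$ holds, giving $F(p^r-1,p^r-1)\equiv0\pmod{p^{r+2}}$ as claimed.

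The simplification of $F(p^r-1,p^r-1)$ is immediate, so the only place demanding care is the carry count for $\binom{4p^r-4}{2p^r-2}$: the base-$p$ digits of $2p^r-2$ are $p-2$ in the units place, $p-1$ in positions $1$ through $r-1$, and $1$ in position $r$, and the behaviour at the units and top digits is mildly sensitive to whether $p=3$ or $p\ge5$. The main obstacle is therefore simply to verify that this digit pattern always produces $r$ carries (never fewer), so that the second binomial contributes valuation at least $r$; everything else is bookkeeping.
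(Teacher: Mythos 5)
Your proof is correct and takes essentially the same route as the paper: both reduce $F(p^r-1,p^r-1)$ to $\frac{4p^r-3}{4^{2p^r-2}}\binom{2p^r-2}{p^r-1}\binom{4p^r-4}{2p^r-2}$, extract a factor of $p^{2r}$ from the two central binomial coefficients, and conclude via $2r\ge r+2$ for $r\ge 2$. The only difference is bookkeeping: the paper pulls out $p^{2r}$ explicitly through the identities $\binom{2p^r-2}{p^r-1}=\frac{p^r}{2p^r-1}\binom{2p^r-1}{p^r-1}$ and its analogue, rewriting $F(p^r-1,p^r-1)=\frac{p^{2r}\binom{2p^r-1}{p^r-1}\binom{4p^r-1}{2p^r-1}}{(4p^r-1)4^{2p^r-2}}$, whereas you use Kummer's carry count for $\binom{4p^r-4}{2p^r-2}$ -- and your carry analysis, including the slightly different digit behaviour at $p=3$, checks out.
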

\begin{proof} Since $r>1$, we have
\begin{align*}
F(p^r-1,p^r-1)=\frac{(4p^r-3)}{4^{2p^r-2}}\binom{2p^r-2}{p^r-1}^2\frac{\binom{4p^r-4}{2p^r-2}}{\binom{2p^r-2}{p^r-1}}=\frac{p^{2r}\binom{2p^r-1}{p^r-1}\binom{4p^r-1}{2p^r-1}}{(4p^r-1)4^{2p^r-2}}\equiv0\pmod{p^{r+2}}.
\end{align*}
\end{proof}
By the definition of $G(n,k)$ we have
\begin{align}\label{Gpk}
G(p^r,k)&=\frac{(-1)^{k+1}(2p^r-1)^2\binom{2p^r-2}{p^r-1}^2}{2(p^r-k)4^{3(p^r-1)-k}}\binom{2p^r-2+2k}{p^r-1+k}\frac{\binom{p^r-1+k}{2k}}{\binom{2k}k}\notag\\
&=\frac{(-1)^{k+1}p^{2r}\binom{2p^r-1}{p^r-1}^2}{2(p^r-k)4^{3(p^r-1)-k}}\binom{2p^r-2+2k}{2k}\frac{\binom{2p^r-2}{p^r-1-k}}{\binom{2k}k}\notag\\
&=\frac{(-1)^{k+1}p^{2r}\binom{2p^r-1}{p^r-1}^2}{2(2p^r-1)4^{3(p^r-1)-k}}\binom{2p^r-2+2k}{2k}\frac{\binom{2p^r-1}{p^r-k}}{\binom{2k}k},
\end{align}
where we used the binomial transformation
$$
\binom{n}{k}\binom{k}{j}=\binom{n}{j}\binom{n-j}{k-j}.
$$
\begin{lem} \label{G12}
$$
\sum_{k=1}^{(p^r-1)/2}G(p^r,k)\equiv 0\pmod{p^{r+2}}.
$$
\end{lem}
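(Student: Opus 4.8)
The plan is to prove the stronger, purely local statement that \emph{each} summand is already divisible by $p^{r+2}$; summing over $k$ then gives the lemma at once. The starting point is the closed form (\ref{Gpk}),
$$
G(p^r,k)=\frac{(-1)^{k+1}p^{2r}\binom{2p^r-1}{p^r-1}^2}{2(2p^r-1)4^{3(p^r-1)-k}}\binom{2p^r-2+2k}{2k}\frac{\binom{2p^r-1}{p^r-k}}{\binom{2k}k},
$$
together with the crucial observation that in this section $r>1$, so $2r\geq r+2$. Thus the prefactor $p^{2r}$ already carries more than enough powers of $p$, and the whole task reduces to controlling the $p$-adic valuations of the remaining factors; in particular, to making sure that the single denominator $\binom{2k}k$ does not destroy the surplus coming from $p^{2r}$.

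First I would dispose of the harmless factors. Since $p$ is odd, $2$, $4^{3(p^r-1)-k}$ and $2p^r-1\equiv-1\pmod p$ are all $p$-adic units, and by Lucas' theorem $\binom{2p^r-1}{p^r-1}\equiv1\pmod p$, so $\binom{2p^r-1}{p^r-1}^2$ is a unit too. Next I would show that $\binom{2p^r-1}{p^r-k}=\binom{2p^r-1}{p^r-1+k}$ is also a $p$-adic unit: the ratio $\binom{2p^r-1}{p^r-1+k}\big/\binom{2p^r-1}{p^r-1}$ equals $\frac{(p^r-1)(p^r-2)\cdots(p^r-k+1)}{(p^r+1)(p^r+2)\cdots(p^r+k-1)}\equiv(-1)^{k-1}\pmod p$ (one may also read this off Lucas directly), whence $\mathrm{ord}_p\binom{2p^r-1}{p^r-k}=0$. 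After these reductions the problem collapses to the identity
$$
\mathrm{ord}_p\bigl(G(p^r,k)\bigr)=2r+\mathrm{ord}_p\binom{2p^r-2+2k}{2k}-\mathrm{ord}_p\binom{2k}k .
$$

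The two genuinely arithmetic inputs I then need are: (i) $p\mid\binom{2p^r-2+2k}{2k}$, and (ii) $\mathrm{ord}_p\binom{2k}k\leq r-1$. For (i) I would write $2p^r-2+2k=2\cdot p^r+(2k-2)$ with $0\leq 2k-2\leq p^r-3<p^r$, so the base-$p$ digit of $N:=2p^r-2+2k$ in position $r$ equals $2$; Lucas' theorem applied to $\binom{N}{2p^r-2}=\binom{2p^r-2+2k}{2k}$ gives $\binom{2p^r-2+2k}{2k}\equiv 2\prod_{i=0}^{r-1}\binom{(2k-2)_i}{(2p^r-2)_i}\pmod p$, the factor $2=\binom{2}{1}$ coming from position $r$. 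Because the lower digits of $2p^r-2$ are $p-2,p-1,\dots,p-1$ (positions $0$ through $r-1$), digitwise domination $(2k-2)_i\geq(2p^r-2)_i$ for all these $i$ would force $2k-2\geq p^r-2$, contradicting $2k\leq p^r-1$; hence some lower factor vanishes and the product is $\equiv0\pmod p$. For (ii) I would invoke Kummer: since $2k\leq p^r-1<p^r$ there can be no carry out of position $r-1$ when adding $k+k$ in base $p$, so the number of carries, which equals $\mathrm{ord}_p\binom{2k}k$, is at most $r-1$.

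Combining (i) and (ii) in the displayed valuation identity yields $\mathrm{ord}_p(G(p^r,k))\geq 2r+1-(r-1)=r+2$ for every $1\leq k\leq(p^r-1)/2$, so $p^{r+2}\mid G(p^r,k)$ term by term and the sum vanishes modulo $p^{r+2}$. The step I would be most careful about is input (i): one must check that the constraint $2k\leq p^r-1$ genuinely prevents $2k-2$ from having all the large lower digits $p-2,p-1,\dots,p-1$ that would otherwise make the Lucas product a unit. This is exactly the inequality $2k-2\leq p^r-3$, and it is precisely what guarantees the extra factor of $p$ from $\binom{2p^r-2+2k}{2k}$ that offsets the loss incurred by $\binom{2k}k$ in the denominator.
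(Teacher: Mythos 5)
Your argument is correct and, like the paper's, it actually proves the stronger term-by-term statement $p^{r+2}\mid G(p^r,k)$ for every $1\le k\le (p^r-1)/2$; the genuine difference lies in how the denominator $\binom{2k}k$ is neutralized. The paper substitutes the Pan--Sun congruence (\ref{dao2kk}), $-2p^r/\bigl(k\binom{2k}k\bigr)\equiv\binom{2p^r-2k}{p^r-k}\pmod{p^2}$, which turns $p^{2r}/\binom{2k}k$ into $-\tfrac12kp^r\binom{2p^r-2k}{p^r-k}$ modulo $p^{r+2}$, and then harvests one factor of $p$ from $\binom{2p^r-2k}{p^r-k}$ via (\ref{2kk}) and one from $\binom{2p^r-2+2k}{2k}$ via (\ref{2pr2k}). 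You instead run a pure valuation count: Kummer's theorem gives $\mathrm{ord}_p\binom{2k}k\le r-1$ because no carry can leave position $r-1$ when $2k<p^r$, and a Lucas digit comparison gives $p\mid\binom{2p^r-2+2k}{2k}$ (your digit argument is equivalent to the paper's factorization (\ref{2pr2k}) combined with $\mathrm{ord}_p\bigl(k(2k-1)\bigr)\le r-1$, which holds since $k$ and $2k-1$ are coprime and each is less than $p^r$), whence $\mathrm{ord}_p G(p^r,k)\ge 2r+1-(r-1)=r+2$. Your route is more elementary and self-contained, needing only Lucas and Kummer rather than the external result (\ref{dao2kk}); note also that your final count works for $r=1$ as well, so the opening remark that $2r\ge r+2$ is not actually what carries the argument --- the bound $\mathrm{ord}_p\binom{2k}k\le r-1$ is sharp (e.g.\ $k=(p^{r-1}+1)/2$), so the extra factor of $p$ you extract from $\binom{2p^r-2+2k}{2k}$ is indispensable, exactly as you observe. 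The one thing the paper's substitution buys that your divisibility count does not is reusability: in Lemmas \ref{Gpr} and \ref{Gpr12pr} the same substitution is needed to compute the \emph{residues} of the remaining terms modulo $p^{r+2}$, not merely to show they vanish.
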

\begin{proof}
By (\ref{Gpk}), we have
\begin{align*}
\sum_{k=1}^{(p^r-1)/2}G(p^r,k)=-\frac{p^{2r}\binom{2p^r-1}{p^r-1}^2}{2(2p^r-1)4^{3(p^r-1)}}\sum_{k=1}^{(p^r-1)/2}\frac{(-4)^k\binom{2p^r-2+2k}{2k}\binom{2p^r-1}{p^r-k}}{\binom{2k}k}.
\end{align*}
In view of (\ref{dao2kk}) we have the following congruence modulo $p^{r+2}$
\begin{align*}
\sum_{k=1}^{(p^r-1)/2}G(p^r,k)\equiv\frac{p^{r}\binom{2p^r-1}{p^r-1}^2}{4(2p^r-1)4^{3(p^r-1)}}\sum_{k=1}^{(p^r-1)/2}k(-4)^k\binom{2p^r-2+2k}{2k}\binom{2p^r-1}{p^r-k}\binom{2p^r-2k}{p^r-k}.
\end{align*}
It is easy to check that
\begin{align}\label{2pr2k}
\binom{2p^r-2+2k}{2k}&=\binom{-2p^r+1}{2k}=\frac{1-2p^r}{2k}\frac{-2p^r}{2k-1}\binom{-2p^r-1}{2k-2}\notag\\
&=\frac{p^r(2p^r-1)}{k(2k-1)}\binom{-2p^r-1}{2k-2}\equiv0\pmod p
\end{align}
since $\frac{p^r}{k(2k-1)}\equiv0\pmod p$ for all $1\leq k\leq p^r-1$ with $k\neq(p^r+1)/2$.

And with (\ref{2kk}) we have $\binom{2p^r-2k}{p^r-k}\equiv 0\pmod p$ for all $1\leq k\leq(p^r-1)/2$. Hence
$$\sum_{k=1}^{(p^r-1)/2}G(p^r,k)\equiv 0\pmod{p^{r+2}}.$$
\end{proof}
\begin{lem}\label{Gpr}
$$
G(p^r,(p^r+1)/2)\equiv(-1)^{(p^r-1)/2}p^r(1-3pq_p(2))\pmod{p^{r+2}},
$$
where $q_p(2)=(2^{p-1}-1)/p$ stands for the Fermat quotient.
\end{lem}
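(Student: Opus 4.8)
The plan is to specialize the closed form (\ref{Gpk}) for $G(p^r,k)$ at $k=(p^r+1)/2$ and then track the exact power of $p$. With this choice $2k=p^r+1$, $p^r-k=(p^r-1)/2$ and $2p^r-2+2k=3p^r-1$, while $4^{3(p^r-1)-k}=2^{5p^r-7}$ and $(-1)^{k+1}=(-1)^{(p^r-1)/2}$, so that (\ref{Gpk}) reads
\[
G(p^r,(p^r+1)/2)=\frac{(-1)^{(p^r-1)/2}p^{2r}\binom{2p^r-1}{p^r-1}^2}{2(2p^r-1)2^{5p^r-7}}\,\binom{3p^r-1}{p^r+1}\,\frac{\binom{2p^r-1}{(p^r-1)/2}}{\binom{p^r+1}{(p^r+1)/2}}.
\]
First I would pull out the exact factor $p^r$. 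Using the elementary factorial identity $\binom{p^r+1}{(p^r+1)/2}=\tfrac{4p^r}{p^r+1}\binom{p^r-1}{(p^r-1)/2}$ one gets $p^{2r}/\binom{p^r+1}{(p^r+1)/2}=p^r(p^r+1)/\bigl(4\binom{p^r-1}{(p^r-1)/2}\bigr)$, hence
\[
G(p^r,(p^r+1)/2)=\frac{(-1)^{(p^r-1)/2}p^{r}(p^r+1)\binom{2p^r-1}{p^r-1}^2}{8(2p^r-1)2^{5p^r-7}}\cdot\frac{\binom{3p^r-1}{p^r+1}\binom{2p^r-1}{(p^r-1)/2}}{\binom{p^r-1}{(p^r-1)/2}}.
\]
It then suffices to evaluate the remaining unit modulo $p^2$, since $r>1$ makes $p^{2r}\equiv0$ and $p^r\equiv0\pmod{p^2}$.

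Next I would dispatch the easy factors using $r>1$: here $p^r+1\equiv1$ and $2p^r-1\equiv-1\pmod{p^2}$; the Wolstenholme congruence (\ref{2p1p}) and its prime-power analogue give $\binom{2p^r-1}{p^r-1}\equiv1$; and from $2^{p-1}\equiv1+pq_p(2)\pmod{p^2}$ together with $1+p+\dots+p^{r-1}\equiv1\pmod p$ one obtains $2^{p^r-1}\equiv1+pq_p(2)$, whence $1/2^{5p^r-7}\equiv4(1-5pq_p(2))\pmod{p^2}$. For the near-central binomial I would write $\binom{3p^r-1}{p^r+1}=\tfrac{2p^r-1}{p^r+1}\binom{3p^r-1}{p^r}$ and invoke the Jacobsthal-type congruence $\binom{3p^r}{p^r}\equiv3\pmod{p^2}$ (so $\binom{3p^r-1}{p^r}\equiv2$), giving $\binom{3p^r-1}{p^r+1}\equiv-2\pmod{p^2}$. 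After these reductions the whole problem collapses to showing $\binom{2p^r-1}{(p^r-1)/2}/\binom{p^r-1}{(p^r-1)/2}\equiv1+2pq_p(2)\pmod{p^2}$.

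The decisive step is this last ratio. Writing it as the quotient of $\prod_{j=p^r}^{2p^r-1}j$ and $\prod_{j=(p^r+1)/2}^{(3p^r-1)/2}j$ and cancelling the common block, I would reduce it to the telescoping product $\prod_{w=1}^{(p^r-1)/2}\frac{2p^r-w}{p^r-w}$. Modulo $p^2$ every factor with $\mathrm{ord}_p(w)\le r-2$ collapses to $1$ (since then $\frac{2p^r-w}{p^r-w}=\frac{2p^{r-\mathrm{ord}_p(w)}-w'}{p^{r-\mathrm{ord}_p(w)}-w'}\equiv1$), and only the terms $w=p^{r-1}w'$ with $1\le w'\le(p-1)/2$ survive, yielding $\prod_{w'=1}^{(p-1)/2}\frac{2p-w'}{p-w'}\equiv1-p\sum_{w'=1}^{(p-1)/2}w'^{-1}\pmod{p^2}$. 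The classical congruence $\sum_{w'=1}^{(p-1)/2}1/w'\equiv-2q_p(2)\pmod p$ then turns this into $1+2pq_p(2)$, exactly as needed. Substituting back,
\[
\frac{G(p^r,(p^r+1)/2)}{p^r}\equiv(-1)^{(p^r-1)/2}\Bigl(-\tfrac12\Bigr)(1-5pq_p(2))(-2)(1+2pq_p(2))\equiv(-1)^{(p^r-1)/2}(1-3pq_p(2))\pmod{p^2},
\]
which is the asserted congruence.

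The main obstacle is precisely this mod-$p^2$ evaluation of the residual binomials and the bookkeeping of the Fermat quotient: the explicit power $2^{5p^r-7}$ by itself contributes only $1-5pq_p(2)$, and it is the \emph{extra} $+2pq_p(2)$ produced by the surviving factors of the telescoping product (through $\sum_{w'\le(p-1)/2}1/w'\equiv-2q_p(2)$) that corrects this to the required $1-3pq_p(2)$. Proving rigorously that only the terms with $\mathrm{ord}_p(w)=r-1$ contribute modulo $p^2$ — so that the product truncates to a sum over $w'\le(p-1)/2$ — is the delicate point of the argument.
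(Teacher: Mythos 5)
Your argument is correct, and its skeleton coincides with the paper's: both specialize (\ref{Gpk}) at $k=(p^r+1)/2$, convert $p^{2r}/\binom{p^r+1}{(p^r+1)/2}$ into $p^r(p^r+1)/\bigl(4\binom{p^r-1}{(p^r-1)/2}\bigr)$, reduce the task to evaluating the remaining unit modulo $p^2$ (legitimate since the lemma is only needed for $r>1$, the case $r=1$ being Sun's result), and finish with $H_{(p-1)/2}\equiv-2q_p(2)\pmod p$. The genuine difference is in how that unit is computed. The paper keeps $\frac{p^r+1}{2p^r-1}\binom{3p^r-1}{2p^r-2}=\binom{3p^r-1}{p^r}$ as an exact identity and expands it as a product to get $2\pmod{p^2}$ via Wolstenholme (\ref{mao}), and it evaluates the two central-type binomials separately, quoting Long's congruence (\ref{pr12pr}) for $\binom{p^r-1}{(p^r-1)/2}$ and the companion congruence (\ref{2pr12pr}) for $\binom{2p^r-1}{(p^r-1)/2}$, so that the Fermat quotients enter through the power $2^{7(p^r-1)}$. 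You instead split off $p^r+1\equiv1$ and $2p^r-1\equiv-1\pmod{p^2}$, handle $\binom{3p^r-1}{p^r+1}\equiv-2$ by a Jacobsthal-type input (equivalent to the paper's (\ref{mao})), and — this is the real novelty — evaluate the ratio $\binom{2p^r-1}{(p^r-1)/2}\big/\binom{p^r-1}{(p^r-1)/2}$ in one stroke as the telescoping product $\prod_{w=1}^{(p^r-1)/2}\frac{2p^r-w}{p^r-w}$, showing that only the factors with $\mathrm{ord}_p(w)=r-1$ survive modulo $p^2$ and yield $1-pH_{(p-1)/2}\equiv1+2pq_p(2)$. This is self-contained (no appeal to Long's lemma) and is consistent with the paper, since $(1-2pH_{(p-1)/2})/4^{p^r-1}\equiv1+2pq_p(2)\pmod{p^2}$ as well; your final bookkeeping $\bigl(-\tfrac12\bigr)(1-5pq_p(2))(-2)(1+2pq_p(2))\equiv1-3pq_p(2)$ matches the paper's $(1+4pq_p(2))(1-7pq_p(2))$. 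What your route buys is independence from the external congruence (\ref{pr12pr}); what the paper's buys is validity of the exact identity step without invoking $r\ge2$.
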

\begin{proof}
In view of (\ref{Gpk}), we have
\begin{align*}
G(p^r,(p^r+1)/2)&=\frac{(-1)^{(p^r-1)/2}p^{2r}\binom{2p^r-1}{p^r-1}^2}{2(2p^r-1)4^{3p^r-3-(p^r+1)/2}}\frac{\binom{3p^r-1}{p^r+1}\binom{2p^r-1}{(p^r-1)/2}}{\binom{p^r+1}{(p^r+1)/2}}\\
&=\frac{(-1)^{(p^r-1)/2}p^{r}(p^r+1)\binom{2p^r-1}{p^r-1}^2}{8(2p^r-1)4^{3p^r-3-(p^r+1)/2}}\frac{\binom{3p^r-1}{2p^r-2}\binom{2p^r-1}{(p^r-1)/2}}{\binom{p^r-1}{(p^r-1)/2}}.
\end{align*}
By \cite[Lemma 2.4]{long-2011-pjm} we have
\begin{align}\label{pr12pr}
\binom{p^r-1}{(p^r-1)/2}\equiv(-1)^{(p^r-1)/2}4^{p^r-1}\pmod{p^3}.
\end{align}
And it is easy to check that
\begin{align}\label{mao}
\frac{p^r+1}{2p^r-1}\binom{3p^r-1}{2p^r-2}&=\binom{3p^r-1}{2p^r-1}=\binom{3p^r-1}{p^r}=-\prod_{k=1}^{p^r}\left(1-\frac{3p^r}k\right)\notag\\
&=2\prod_{k=1}^{p^r-1}\left(1-\frac{3p^r}k\right)\equiv2(1-3p^rH_{p^r-1})\notag\\
&\equiv2(1-3pH_{p-1})\equiv2\pmod {p^2}
\end{align}
with Wolstenholme's result $H_{p-1}\equiv0\pmod{p^2}$ as we mentioned in the introduction.

In the same way, we have
\begin{align}\label{2prprp2}
\binom{2p^r-1}{p^r-1}\equiv1\pmod{p^2}
\end{align}
and
\begin{align}\label{2pr12pr}
\binom{2p^r-1}{(p^r-1)/2}\equiv(-1)^{(p^r-1)/2}(1-2pH_{(p-1)/2})\pmod{p^2}.
\end{align}
Hence
$$
G(p^r,(p^r+1)/2)\equiv\frac{(-1)^{(p^r-1)/2}p^{r}(1-2pH_{(p-1)/2})}{2^{7(p^r-1)}}\pmod{p^{r+2}}.
$$
Therefore the desired result immediately obtained since
\begin{align}\label{h12p}
H_{(p-1)/2}\equiv-2q_p(2)\pmod p
\end{align}
and
$$2^{p^r-1}\equiv1+pq_p(2)\pmod{p^2},$$
the congruence (\ref{h12p}) can be found in \cite{sunh-jnt-2008}.
\end{proof}
\begin{lem}\label{sun1}{\rm (\cite[(1.1)]{sun-jnt-2011})} Let $p$ be an odd prime. Then
$$
\sum_{k=0}^{(p-3)/2}\frac{\binom{2k}k}{(2k+1)4^k}\equiv-(-1)^{(p-1)/2}q_p(2)\pmod{p^2}.
$$
\end{lem}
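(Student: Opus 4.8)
The plan is to reduce this arcsin-type sum to the hypergeometric polynomial $\binom{(p-1)/2}{k}$, for which a clean closed form is available, and then to track the error modulo $p^2$. First I would use the elementary identity $\binom{2k}{k}/4^k=(-1)^k\binom{-1/2}{k}$ together with $-\tfrac12\equiv\tfrac{p-1}2\pmod p$ in $\mathbb{Z}_p$; expanding the telescoping product $\binom{(p-1)/2}{k}=\binom{-1/2}{k}\prod_{j=0}^{k-1}\bigl(1-\tfrac{p}{2j+1}\bigr)$ gives, for every $0\le k\le(p-3)/2$ (where each $2k+1$ is a $p$-adic unit),
\begin{align*}
\frac{\binom{2k}{k}}{(2k+1)4^k}\equiv\frac{(-1)^k\binom{(p-1)/2}{k}}{2k+1}\Bigl(1+p\sum_{j=0}^{k-1}\frac1{2j+1}\Bigr)\pmod{p^2}.
\end{align*}
Summing over $k$, the sum in question splits as $A+pB\pmod{p^2}$, where $A=\sum_{k=0}^{(p-3)/2}(-1)^k\binom{(p-1)/2}{k}/(2k+1)$ and $B=\sum_{k=0}^{(p-3)/2}\frac{(-1)^k\binom{(p-1)/2}{k}}{2k+1}\sum_{j=0}^{k-1}\frac1{2j+1}$.

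For the main term $A$ I would invoke the Beta-integral evaluation $\sum_{k=0}^{n}\frac{(-1)^k\binom nk}{2k+1}=\int_0^1(1-x^2)^n\,dx=\frac{4^n}{(2n+1)\binom{2n}n}$. Taking $n=(p-1)/2$ and subtracting the omitted $k=(p-1)/2$ term (whose denominator $2k+1=p$ is exactly what forces the truncation) yields the exact rational evaluation $A=\frac{4^{(p-1)/2}}{p\binom{p-1}{(p-1)/2}}-\frac{(-1)^{(p-1)/2}}{p}$. Now Long's congruence $\binom{p-1}{(p-1)/2}\equiv(-1)^{(p-1)/2}4^{p-1}\pmod{p^3}$ (the $r=1$ case of (\ref{pr12pr})) together with the exact identity $2^{p-1}=1+pq_p(2)$ lets me expand $\frac{4^{(p-1)/2}}{\binom{p-1}{(p-1)/2}}\equiv(-1)^{(p-1)/2}(1-pq_p(2)+p^2q_p(2)^2)\pmod{p^3}$. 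The two $1/p$ poles then cancel, leaving $A\equiv-(-1)^{(p-1)/2}q_p(2)+(-1)^{(p-1)/2}pq_p(2)^2\pmod{p^2}$; note that carrying Long's congruence all the way to modulus $p^3$ is needed precisely so that the $O(p)$ part of $A$ is pinned down and not merely its residue mod $p$.

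It remains to control $B$ modulo $p$, and this is where I expect the real work to lie. Here I would evaluate the harmonic sum by summation by parts against the partial products $\prod_{j=0}^{k-1}(1-p/(2j+1))$ (equivalently, by differentiating the identity for $A$ with respect to its parameter), and reduce the resulting odd harmonic sums via $\sum_{j=0}^{(p-3)/2}\frac1{2j+1}=H_{p-1}-\tfrac12H_{(p-1)/2}\equiv q_p(2)\pmod p$, which combines Wolstenholme's $H_{p-1}\equiv0$ with (\ref{h12p}). The expected outcome is $B\equiv-(-1)^{(p-1)/2}q_p(2)^2\pmod p$, so that $pB$ exactly cancels the $(-1)^{(p-1)/2}pq_p(2)^2$ term coming from $A$. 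The main obstacle is thus this last cancellation: establishing that the $O(p)$ contribution of $A$ and the term $pB$ annihilate each other to leave only the single Fermat-quotient term. Granting this, $A+pB$ gives $\sum_{k=0}^{(p-3)/2}\frac{\binom{2k}k}{(2k+1)4^k}\equiv-(-1)^{(p-1)/2}q_p(2)\pmod{p^2}$, which is the assertion; alternatively one may simply appeal to \cite[(1.1)]{sun-jnt-2011}.
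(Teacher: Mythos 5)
The paper does not prove this lemma at all: it is quoted verbatim as equation (1.1) of Z.-W. Sun's paper \cite{sun-jnt-2011}, so the ``paper's proof'' is the citation you offer as a fallback in your last sentence. Measured as a self-contained argument, your outline is correct and verifiable up to a point: the identity $\binom{2k}{k}/4^k=(-1)^k\binom{-1/2}{k}$, the product expansion relating $\binom{-1/2}{k}$ to $\binom{(p-1)/2}{k}$, the Beta-integral evaluation giving the exact value of $A$, and the expansion of $A$ via Morley's congruence $\binom{p-1}{(p-1)/2}\equiv(-1)^{(p-1)/2}4^{p-1}\pmod{p^3}$ all check out (e.g.\ for $p=5$ one gets $A=\tfrac13\equiv-3+5q_5(2)^2\pmod{25}$ as you predict; note only that Morley modulo $p^3$ requires $p\ge5$, so $p=3$ needs a separate one-line check).

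The genuine gap is the treatment of $B$. The congruence $B\equiv-(-1)^{(p-1)/2}q_p(2)^2\pmod p$ is exactly the point where the whole modulo-$p^2$ refinement lives, and you do not establish it: reducing $\binom{(p-1)/2}{k}$ modulo $p$ turns $B$ into $\sum_{k=0}^{(p-3)/2}\frac{\binom{2k}{k}}{(2k+1)4^k}\bigl(H_{2k}-\tfrac12H_k\bigr)$ modulo $p$, a harmonic--central-binomial sum whose evaluation is of essentially the same depth as the lemma itself. The hint ``summation by parts against the partial products'' or ``differentiating the identity for $A$ with respect to its parameter'' does not obviously produce this value, because the Beta-integral identity has no free parameter whose derivative generates the weights $\sum_{j<k}1/(2j+1)$; one would need a genuinely new identity or a known congruence for that weighted sum (such results exist in work of Sun, Tauraso and Mattarei, but must be invoked or proved). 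As written, the crucial cancellation of the $pq_p(2)^2$ terms is asserted, not demonstrated, so the argument is incomplete as a proof and in practice collapses back to the citation of \cite[(1.1)]{sun-jnt-2011} --- which is precisely what the paper does.
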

\begin{lem}\label{Gpr12pr}
$$\sum_{k=(p^r+3)/2}^{p^r-1}G(p^r,k)\equiv(-1)^{(p^r-1)/2}3p^{r+1}q_p(2)\pmod{p^{r+2}}.$$
\end{lem}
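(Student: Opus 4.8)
The plan is to evaluate the sum through the substitution $l=p^r-k$, which turns the range $(p^r+3)/2\le k\le p^r-1$ into $1\le l\le(p^r-3)/2$, and then to isolate the finitely many terms that are nonzero modulo $p^{r+2}$. Starting from (\ref{Gpk}), I first use the factorization $\binom{2p^r-2+2k}{2k}=\frac{p^r(2p^r-1)}{k(2k-1)}\binom{2p^r+2k-2}{2k-2}$ already recorded in (\ref{2pr2k}), which displays an explicit factor $p^r$. Since $k=p^r-l$ with $0<l<p^r/2$, the leading relation (\ref{2ll2kk}), $l\binom{2l}l\binom{2k}k\equiv-2p^r\pmod{p^{r+1}}$, lets me replace $p^{3r}/\binom{2k}k$ by $-p^{2r}l\binom{2l}l/2$; the resulting relative error is $O(p)$, and once multiplied by the remaining factors (of $p$-adic valuation $1-r$) it contributes only $O(p^{r+2})$ and may be dropped. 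After this reduction each $G(p^r,k)$ is, modulo $p^{r+2}$, a $p$-adic-unit multiple of $p^{2r}\,l\binom{2l}l\binom{2p^r-1}{l}\binom{2p^r+2k-2}{2k-2}/\big(k(2k-1)\big)$.

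Next I carry out a valuation count to see which $l$ survive. Because $\mathrm{ord}_p(k)=\mathrm{ord}_p(l)$ and $\binom{2p^r-1}{l}$ is a $p$-adic unit, the valuation of the general term equals $2r+\mathrm{ord}_p\binom{2l}l+\mathrm{ord}_p\binom{2p^r+2k-2}{2k-2}-\mathrm{ord}_p(2l+1)$. As $2l+1<p^r$ forces $\mathrm{ord}_p(2l+1)\le r-1$, every term has valuation at least $r+1$, with equality precisely when $\mathrm{ord}_p(2l+1)=r-1$ and both binomials are units. This singles out $2l+1=(2j+1)p^{r-1}$ with $0\le j\le(p-3)/2$, a bijection between the surviving indices and the summation range of Lemma \ref{sun1}; all other terms vanish modulo $p^{r+2}$, and each surviving term has valuation exactly $r+1$, so only its residue modulo $p$ after extracting $p^{r+1}$ matters.

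For such an $l$ I would reduce the unit part modulo $p$ by Lucas' theorem. One finds $(-1)^{k+1}=(-1)^l$, $\binom{2p^r-1}{p^r-1}^2\equiv1$, and $(-1)^l\binom{2p^r-1}{l}\equiv1$ (using $\binom{p-1}{d}\equiv(-1)^d\pmod p$ together with the fact that $l$ has the same parity as its base-$p$ digit sum), as well as the crucial value $\binom{2p^r+2k-2}{2k-2}\equiv3\pmod p$: the digit $\binom{3}{1}$ produced at position $r$ is exactly the source of the factor $3$ in the statement. Writing $2k-1=p^{r-1}(2p-m)$ with $m=2j+1$ converts $p^{2r}/(2k-1)$ into $p^{r+1}/(2p-m)$, producing the power $p^{r+1}$; collecting the powers of $4$ and applying the Lucas-type congruence $\binom{2l}l4^{-l}\equiv(-1)^{(p-1)(r-1)/2}\binom{2j}j4^{-j}\pmod p$ rewrites each surviving term as $-3(-1)^{(p-1)(r-1)/2}p^{r+1}\binom{2j}j/\big((2j+1)4^j\big)$. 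Summing over $j$ and invoking Lemma \ref{sun1}, namely $\sum_{j=0}^{(p-3)/2}\binom{2j}j/\big((2j+1)4^j\big)\equiv-(-1)^{(p-1)/2}q_p(2)\pmod{p}$, collapses the total to $3(-1)^{(p-1)(r-1)/2+(p-1)/2}p^{r+1}q_p(2)=3(-1)^{(p^r-1)/2}p^{r+1}q_p(2)$, which is the assertion.

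The main obstacle is the control of signs and units modulo $p$, concentrated in the Lucas-type evaluation of $\binom{2l}l4^{-l}$ at $l=((2j+1)p^{r-1}-1)/2$: proving the sign $(-1)^{(p-1)(r-1)/2}$ — most cleanly by writing $\binom{2l}l/4^l=(1/2)_l/l!$ and analyzing this Pochhammer ratio $p$-adically — is exactly what reconciles the $(-1)^{(p-1)/2}$ coming out of Sun's sum with the $(-1)^{(p^r-1)/2}$ demanded by the statement. A secondary, more routine task is to check that all discarded terms really have valuation $\ge r+2$ and that the $O(p)$ error incurred by using (\ref{2ll2kk}) only to precision $p^{r+1}$ genuinely disappears modulo $p^{r+2}$.
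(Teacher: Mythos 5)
Your proposal is correct and follows essentially the same route as the paper's proof: the reflection $l=p^r-k$, the use of \eqref{2ll2kk} to trade $p^{3r}/\binom{2k}k$ for $-p^{2r}l\binom{2l}l/2$, the observation that only indices with $p^{r-1}\mid(2l+1)$ survive modulo $p^{r+2}$, the evaluation $\binom{-2p^r-1}{2k-2}\equiv3\pmod p$, the Lucas-type congruence \eqref{2jpr} producing the sign $(-1)^{(p^{r-1}-1)/2}=(-1)^{(p-1)(r-1)/2}$, and the final appeal to Lemma \ref{sun1}. Your valuation bookkeeping (including the check that the $O(p)$ relative error in \eqref{2ll2kk} is harmless) matches the paper's computation, merely phrased more systematically.
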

\begin{proof}
Again by (\ref{Gpk}), we have
$$
\sum_{k=(p^r+3)/2}^{p^r-1}G(p^r,k)=\sum_{k=(p^r+3)/2}^{p^r-1}\frac{(-1)^{k+1}p^{2r}\binom{2p^r-1}{p^r-1}^2}{2(2p^r-1)4^{3(p^r-1)-k}}\binom{2p^r-2+2k}{2k}\frac{\binom{2p^r-1}{p^r-k}}{\binom{2k}k}.
$$
(\ref{2pr2k}) tells us that $p|\binom{2p^r-2+2k}{2k}$ for all $(p^r+3)/2\leq k<p^r$, and with (\ref{2ll2kk}) we have
\begin{align}\label{dao2kkhb}
\frac{-2p^r}{\binom{2k}k}\equiv(p^r-k)\binom{2p^r-2k}{p^r-k}\pmod{p}.
\end{align}
Hence
\begin{align*}
&\sum_{k=(p^r+3)/2}^{p^r-1}G(p^r,k)\\
&\equiv\frac{p^{r}\binom{2p^r-1}{p^r-1}^2}{4(2p^r-1)4^{3(p^r-1)}}\sum_{k=(p^r+3)/2}^{p^r-1}(p^r-k)\binom{2p^r-2+2k}{2k}\binom{2p^r-1}{p^r-k}\binom{2p^r-2k}{p^r-k}\\
&=\frac{p^{r}\binom{2p^r-1}{p^r-1}^2}{(2p^r-1)4^{3p^r-2}}\sum_{k=1}^{(p^r-3)/2}k(-4)^{p^r-k}\binom{4p^r-2-2k}{2p^r-2k}\binom{2p^r-1}{k}\binom{2k}{k}\\
&=\frac{p^{r}\binom{2p^r-1}{p^r-1}^2}{(2p^r-1)4^{3p^r-2}}\sum_{k=1}^{(p^r-3)/2}k(-4)^{p^r-k}\binom{-2p^r+1}{2p^r-2k}\binom{2p^r-1}{k}\binom{2k}{k}\\
&=-\frac{p^{2r}\binom{2p^r-1}{p^r-1}^2}{4^{2p^r-2}}\sum_{k=1}^{(p^r-3)/2}\frac{k\binom{2k}{k}}{(-4)^k}\frac{\binom{-2p^r-1}{2p^r-2k-2}\binom{2p^r-1}{k}}{(p^r-k)(2p^r-2k-1)}\pmod{p^{r+2}}.
\end{align*}
It is easy to see that for each $1\leq k\leq (p^r-3)/2$, we have
\begin{align}\label{2prfu1}
\binom{2p^r-1}{k}\equiv(-1)^k\pmod p
\end{align}
and
\begin{align}\label{fu2pr}
\binom{-2p^r-1}{2p^r-2k-2}=\prod_{i=1}^{2p^r-2k-2}\left(1+\frac{2p^r}{i}\right)=3\prod_{i=1,i\neq p^r}^{2p^r-2k-2}\left(1+\frac{2p^r}{i}\right)\equiv3\pmod p.
\end{align}
Thus
$$
\sum_{k=(p^r+3)/2}^{p^r-1}G(p^r,k)\equiv-3p^{2r}\sum_{k=1}^{(p^r-3)/2}\frac{k\binom{2k}{k}}{4^k}\frac{1}{(p^r-k)(2p^r-2k-1)}.
$$
We just need to see these items with $2k+1=p^{r-1}j, j\in\{1,2,\ldots,p-1\}$ and $2\nmid j$,
\begin{align*}
\sum_{k=(p^r+3)/2}^{p^r-1}G(p^r,k)&\equiv-3p^{r+1}\sum_{j=1,2\nmid j}^{p-1}\frac{\frac{p^{r-1}j-1}2\binom{p^{r-1}j-1}{\frac{p^{r-1}j-1}2}}{(p^r-\frac{p^{r-1}j-1}2)(2p-j)2^{p^{r-1}j-1}}\\
&\equiv-3p^{r+1}\sum_{j=1}^{(p-1)/2}\frac{\binom{2jp^{r-1}-p^{r-1}-1}{jp^{r-1}-\frac{p^{r-1}+1}2}}{(2j-1)4^{j-1}}\pmod{p^{r+2}}.
\end{align*}
It is easy to verify that
\begin{align}\label{2jpr}
\binom{2jp^{r-1}-p^{r-1}-1}{jp^{r-1}-\frac{p^r+1}2}&=(-1)^{jp^{r-1}+(p^{r-1}+1)/2}\prod_{i=1}^{jp^{r-1}-(p^{r-1}+1)/2}\left(1-\frac{(2j-1)p^{r-1}}i\right)\notag\\
&\equiv(-1)^{jp^{r-1}+(p^{r-1}+1)/2}\prod_{i=1}^{j-1}\left(1-\frac{2j-1}i\right)\notag\\
&=(-1)^{(p^{r-1}-1)/2}\binom{2j-2}{j-1}\pmod p.
\end{align}
So
$$
\sum_{k=(p^r+3)/2}^{p^r-1}G(p^r,k)\equiv-3p^{r+1}(-1)^{(p^{r-1}-1)/2}\sum_{j=0}^{(p-3)/2}\frac{\binom{2j}{j}}{(2j+1)4^{j}}\pmod{p^{r+2}}.
$$
This, with Lemma \ref{sun1} and $(-1)^{\frac{p^{r-1}-1}2}(-1)^{\frac{p-1}2}=(-1)^{\frac{p^{r}-1}2}$ yield the desired result.
\end{proof}
\noindent{\it Proof of Theorem \ref{Thgjmaa}}. Combining (\ref{wz1}) with Lemmas \ref{Fp1p1}--\ref{Gpr12pr}, we immediately get that
$$
\sum_{k=0}^{p^r-1}\frac{4k+1}{(-64)^k}\binom{2k}k^3\equiv(-1)^{(p^r-1)/2}p^r\pmod{p^{r+2}}.
$$
We know that $(-1)^{(p^r-1)/2}=(-1)^{\frac{(p-1)r}2}$. Therefore the proof of Theorem \ref{Thgjmaa} is complete.\qed
\section{Proof of Theorem \ref{Thglarxiv2}}
We just need to prove the theorem for $r>1$ since (\ref{glp4}) contains the theorem for $r=1$.

For nonnegative integer $n, k$, define
\begin{equation}\label{12F}
F(n,k)=(-1)^{n+k}\frac{(4n-1)\left(-\frac12\right)_n^2\left(-\frac12\right)_{n+k}}{(1)_n^2(1)_{n-k}\left(-\frac12\right)_k^2}
\end{equation}
and
\begin{equation}\label{12G}
G(n,k)=(-1)^{n+k}\frac{2\left(-\frac12\right)_n^2\left(-\frac12\right)_{n+k-1}}{(1)_{n-1}^2(1)_{n-k}\left(-\frac12\right)_k^2},
\end{equation}
where we assume that $1/(1)_n=0$ for $n=-1,-2,\ldots$. It can be easily verified that
\begin{equation}\label{FG1}
F(n,k-1)-F(n,k)=G(n+1,k)-G(n,k)
\end{equation}
for all nonnegative integer $n$ and $k>0$.

We know
$$
\sum_{n=0}^{p^r-1}(-1)^n(4n-1)\frac{\left(-\frac12\right)_n^3}{(1)_n^3}=\sum_{n=0}^{p^r-1}F(n,0).
$$
Summing (\ref{FG1}) over $n$ from $0$ to $p^r-1$ and then over $k$ from $1$ to $p^r-1$, we have
\begin{align}\label{wz3}
\sum_{n=0}^{p^r-1}F(n,0)=F(p^r-1,p^r-1)+\sum_{k=1}^{p^r-1}G(p^r,k).
\end{align}
\begin{lem}\label{F2prpr}
$$
F(p^r-1,p^r-1)\equiv0\pmod{p^{r+2}}.
$$
\end{lem}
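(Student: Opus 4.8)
The plan is to evaluate $F(p^r-1,p^r-1)$ in closed form and then simply read off its $p$-adic valuation, exactly in the spirit of Lemma~\ref{Fp1p1}. Setting $n=k=p^r-1$ in \eqref{12F}, the two factors $\left(-\frac12\right)_n^2$ and $\left(-\frac12\right)_k^2$ cancel, $(1)_{n-k}=(1)_0=1$, and $(-1)^{n+k}=1$, so that
\begin{equation*}
F(p^r-1,p^r-1)=\frac{(4p^r-5)\left(-\frac12\right)_{2p^r-2}}{\big((p^r-1)!\big)^2}.
\end{equation*}
Using the standard evaluation $\left(-\frac12\right)_m=-\dfrac{(2m-2)!}{2^{2m-1}(m-1)!}$ with $m=2p^r-2$, together with the Pascal identity $\binom{4p^r-4}{2p^r-2}=2\binom{4p^r-5}{2p^r-2}$, this collapses to the product form
\begin{equation*}
F(p^r-1,p^r-1)=-\frac{1}{4^{2p^r-2}}\binom{4p^r-4}{2p^r-2}\binom{2p^r-2}{p^r-1},
\end{equation*}
which is the precise analogue of the expression met in the proof of Lemma~\ref{Fp1p1}.

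Next I would exhibit a factor $p^r$ in each of the two binomial coefficients. For the inner one the identity $\binom{2p^r-2}{p^r-1}=\frac{p^r}{2p^r-1}\binom{2p^r-1}{p^r-1}$, together with $\gcd(2p^r-1,p)=1$ and the integrality of $\binom{2p^r-1}{p^r-1}$, shows at once that $p^r\mid\binom{2p^r-2}{p^r-1}$; this is also immediate from \eqref{2ll2kk} with $l=1$. For the outer one I would apply Kummer's theorem to $\binom{4p^r-4}{2p^r-2}$, that is, count the carries produced when $2p^r-2$ is added to itself in base $p$. Writing $2p^r-2$ in base $p$ (digit $1$ at position $r$, digit $p-1$ at positions $1,\dots,r-1$, digit $p-2$ at position $0$), one checks that for $r\ge 2$ there are exactly $r$ carries, whence $p^r\mid\binom{4p^r-4}{2p^r-2}$.

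Since $4^{2p^r-2}$ is prime to the odd prime $p$, combining the two divisibilities yields $p^{2r}\mid F(p^r-1,p^r-1)$; and as $r>1$ we have $2r\ge r+2$, which gives $F(p^r-1,p^r-1)\equiv 0\pmod{p^{r+2}}$. I expect the only genuinely delicate point to be the carry count for $\binom{4p^r-4}{2p^r-2}$: the carry pattern differs between $p=3$ and $p\ge 5$ (at $p=3$ the units digit produces no carry, while the top digit does), so one must check separately that both patterns still total $r$ carries when $r\ge 2$. An alternative that avoids Kummer is to mimic Lemma~\ref{Fp1p1} and rewrite the product as $\dfrac{p^{2r}\binom{2p^r-1}{p^r-1}\binom{4p^r-1}{2p^r-1}}{(4p^r-1)(4p^r-3)}$; but then the denominator factor $4p^r-3$ is itself divisible by $p$ when $p=3$, so one would have to note the compensating divisibility $p\mid\binom{4p^r-1}{2p^r-1}$ in that case, which is exactly why I prefer the direct carry count.
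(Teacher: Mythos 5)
Your proposal is correct and follows essentially the same route as the paper: evaluate $F(p^r-1,p^r-1)$ in closed form as a unit (prime to $p$) times $\binom{4p^r-4}{2p^r-2}\binom{2p^r-2}{p^r-1}\big/4^{2p^r-2}$, extract a factor $p^r$ from each binomial coefficient, and conclude $p^{2r}\mid F(p^r-1,p^r-1)$ with $2r\ge r+2$ for $r>1$. If anything, your Kummer carry count is more careful than the paper's bare assertion that $\binom{4p^r-6}{2p^r-3}\equiv0\pmod{p^r}$ --- a claim that actually fails for $p=5$, where that coefficient has $p$-adic valuation only $r-1$ and the conclusion is rescued by the factor $4p^r-5$ being divisible by $5$; your normalization $\binom{4p^r-4}{2p^r-2}$, with exactly $r$ carries for every odd $p$, sidesteps this entirely.
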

\begin{proof}
By (\ref{12F}), we have
\begin{align*}
F(p^r-1,p^r-1)=\frac{(4p^r-5)\left(-\frac12\right)_{2p^r-2}}{(1)^2_{p^r-1}}=-\frac{4p^r-5}{4p^r-4}\frac{\binom{4p^r-6}{2p^r-3}\binom{2p^r-2}{p^r-1}}{4^{2p^r-3}}\equiv0\pmod{p^{r+2}},
\end{align*}
since $\binom{4p^r-6}{2p^r-3}\equiv\binom{2p^r-2}{p^r-1}\equiv0\pmod{p^r}$ and $r>1$.
\end{proof}
It is easy to see from (\ref{12G}) that
\begin{align*}
G(p^r,k)&=(-1)^{k+1}\frac{\left(-\frac12\right)^2_{p^r}\left(-\frac12\right)_{p^r-1+k}}{(1)^2_{p^r-1}(1)_{p^r-k}\left(-\frac12\right)^2_{k}}=\frac{(-1)^{k+1}}2\frac{\binom{2p^r-2}{p^r-1}^2}{4^{2p^r-2}}\frac{\left(-\frac12\right)_{p^r-1+k}}{(1)_{p^r-k}\left(-\frac12\right)^2_{k}}\\
&=\frac{\binom{2p^r-2}{p^r-1}^2}{4^{3p^r-2}}\frac{(-4)^k}{\binom{2k-2}{k-1}}\binom{2p^r-4+2k}{2k-2}\binom{2p^r-2}{p^r-k}.
\end{align*}
Hence
\begin{align*}
\sum_{k=1}^{p^r-1}G(p^r,k)&=\frac{\binom{2p^r-2}{p^r-1}^2}{4^{3p^r-2}}\sum_{k=1}^{p^r-1}\frac{(-4)^k}{\binom{2k-2}{k-1}}\binom{2p^r-4+2k}{2k-2}\binom{2p^r-2}{p^r-k}\\
&=\frac{\binom{2p^r-2}{p^r-1}^2}{4^{3p^r-2}}\sum_{k=0}^{p^r-2}\frac{(-4)^{k+1}}{\binom{2k}{k}}\binom{2p^r-2+2k}{2k}\binom{2p^r-2}{p^r-k-1}.
\end{align*}
Since
$$
\frac{\binom{2p^r-2}{p^r-1}^2}{4^{3p^r-2}}(-4)\binom{2p^r-2}{p^r-1}\equiv0\pmod{p^{r+2}}
$$
and
$$
\frac{\binom{2p^r-2}{p^r-1}^2}{4^{3p^r-2}}\frac{(-4)^{p^r-1+1}}{\binom{2p^r-2}{p^r-1}}\binom{4p^r-4}{2p^r-2}\equiv0\pmod{p^{r+2}}.
$$
Then we have
\begin{align}\label{prGpr}
\sum_{k=1}^{p^r-1}G(p^r,k)&\equiv\frac{p^{2r}\binom{2p^r-1}{p^r-1}^2}{(2p^r-1)^24^{3p^r-2}}\sum_{k=1}^{p^r-1}\frac{(-4)^{k+1}}{\binom{2k}{k}}\binom{-2p^r+1}{2k}\binom{2p^r-2}{p^r-k-1}\notag\\
&=-\frac{p^{3r}\binom{2p^r-1}{p^r-1}^2}{(2p^r-1)4^{3p^r-3}}\sum_{k=1}^{p^r-1}\frac{(-4)^{k}}{\binom{2k}{k}}\frac{\binom{-2p^r-1}{2k-2}}{k(2k-1)}\binom{2p^r-2}{p^r-k-1}\pmod{p^{r+2}}.
\end{align}
\begin{lem}\label{theta1}
$$
\theta_1=-\frac{p^{3r}\binom{2p^r-1}{p^r-1}^2}{(2p^r-1)4^{3p^r-3}}\sum_{k=1}^{(p^r-1)/2}\frac{(-4)^{k}}{\binom{2k}{k}}\frac{\binom{-2p^r-1}{2k-2}}{k(2k-1)}\binom{2p^r-2}{p^r-k-1}\equiv0\pmod{p^{r+2}}.
$$
\end{lem}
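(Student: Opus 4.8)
The plan is to estimate the $p$-adic valuation of each summand and show it is at least $r+2$, so that no cancellation among the terms is needed. First I would record that the prefactor $-p^{3r}\binom{2p^r-1}{p^r-1}^2/((2p^r-1)4^{3p^r-3})$ equals $p^{3r}$ times a $p$-adic unit, since $\binom{2p^r-1}{p^r-1}$ (by (\ref{2prprp2})), $2p^r-1$, and $4^{3p^r-3}$ are all prime to $p$. Hence it suffices to prove, for each $1\le k\le(p^r-1)/2$,
\[
p^{3r}\,\frac{(-4)^k}{\binom{2k}k}\,\frac{\binom{-2p^r-1}{2k-2}}{k(2k-1)}\binom{2p^r-2}{p^r-1-k}\equiv0\pmod{p^{r+2}}.
\]

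Next I would clear away the harmless factors. For $1\le k\le(p^r-1)/2$ one has $2k-2<p^r$, so $\binom{-2p^r-1}{2k-2}=\prod_{i=1}^{2k-2}(1+2p^r/i)$ is a product of factors each $\equiv1\pmod p$ (because $\mathrm{ord}_p(i)\le r-1$ whenever $i<p^r$); thus $\binom{-2p^r-1}{2k-2}$ is a $p$-adic unit. Similarly $0<2k-1<p^r$ forces $\mathrm{ord}_p(2k-1)\le r-1$, while $\binom{2p^r-2}{p^r-1-k}$ is an integer and so has nonnegative valuation.

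The crux is the factor $1/(k\binom{2k}k)$. Since the summation index satisfies $k<p^r/2$, it plays the role of the small index in (\ref{dao2kk}), which I would apply in the form
\[
\frac{-2p^r}{k\binom{2k}k}\equiv\binom{2(p^r-k)}{p^r-k}\pmod{p^2}.
\]
This yields $\dfrac{p^{3r}}{k\binom{2k}k}=-\tfrac12p^{2r}\binom{2(p^r-k)}{p^r-k}+(\text{a term of valuation}\ge2r+2)$. In the main term the substitution costs a factor $p^{r}$, but by (\ref{2kk}) we have $\binom{2(p^r-k)}{p^r-k}\equiv0\pmod p$ (as $p^r-k>p^r/2$), which restores one power of $p$. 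Assembling the valuations $2r$ (from $p^{2r}$), $\ge1$ (from $\binom{2(p^r-k)}{p^r-k}$), $\ge0$ (from $\binom{2p^r-2}{p^r-1-k}$), $0$ (from the unit $\binom{-2p^r-1}{2k-2}$), and $\ge-(r-1)$ (from $1/(2k-1)$), the main term has valuation at least $2r+1-(r-1)=r+2$, and the correction term at least $2r+2-(r-1)=r+3$. Summing over $k$ then gives $\theta_1\equiv0\pmod{p^{r+2}}$.

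I expect the only delicate point to be the worst case $\mathrm{ord}_p(2k-1)=r-1$, where the bound $r+2$ is attained exactly; there one must genuinely use that $\binom{2(p^r-k)}{p^r-k}$ supplies a full extra factor of $p$ and that $p\nmid k$ in this case (so $1/(k\binom{2k}k)$ hides no further loss of valuation, which is consistent with $\mathrm{ord}_p(k)+\mathrm{ord}_p(\binom{2k}k)\le r-1$ forced by (\ref{2ll2kk})). Everything else is routine valuation bookkeeping carried out termwise.
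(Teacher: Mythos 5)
Your termwise valuation argument is correct and is essentially the paper's own proof: the paper likewise disposes of the lemma by noting that $p^r/(k(2k-1))\equiv p^r/\binom{2k}k\equiv 0\pmod p$ for $1\le k\le(p^r-1)/2$ (consequences of (\ref{2kk}) and (\ref{dao2kk})), so that the prefactor $p^{3r}$ leaves each summand with valuation at least $3r-2(r-1)=r+2$. Your substitution of $\binom{2(p^r-k)}{p^r-k}$ via (\ref{dao2kk}) is just a repackaging of the same valuation facts, carried out in more detail than the paper's one-line proof.
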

\begin{proof}
By (\ref{2kk}) and (\ref{dao2kk}) we have
$$
\frac{p^r}{k(2k-1)}\equiv\frac{p^r}{\binom{2k}k}\equiv0\pmod p
$$
for each $1\leq k\leq(p^r-1)/2$. So we immediately obtain the desired result.
\end{proof}
\begin{lem}\label{theta2}
\begin{align*}
\theta_2&=-\frac{p^{3r}\binom{2p^r-1}{p^r-1}^2}{(2p^r-1)4^{3p^r-3}}\frac{(-4)^{(p^r+1)/2}}{\binom{p^r+1}{(p^r+1)/2}}\frac{\binom{-2p^r-1}{p^r-1}}{(p^r+1)/2(p^r)}\binom{2p^r-2}{(p^r-3)/2}\\
&\equiv-(-1)^{(p^r-1)/2}p^r(1-3pq_p(2))\pmod{p^{r+2}}.
\end{align*}
\end{lem}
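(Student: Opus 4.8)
The plan is to treat $\theta_2$ exactly as $G(p^r,(p^r+1)/2)$ was handled in Lemma \ref{Gpr}: reduce every binomial coefficient occurring in $\theta_2$ to the ``standard'' coefficients whose residues are already recorded in (\ref{pr12pr}), (\ref{mao}), (\ref{2prprp2}) and (\ref{2pr12pr}), and then read off the answer modulo $p^{r+2}$. First I would rewrite the three awkward factors by elementary identities,
$$\binom{p^r+1}{(p^r+1)/2}=\frac{4p^r}{p^r+1}\binom{p^r-1}{(p^r-1)/2},\qquad \binom{-2p^r-1}{p^r-1}=\binom{3p^r-1}{p^r-1},\qquad \binom{2p^r-2}{(p^r-3)/2}=\frac{p^r-1}{2(2p^r-1)}\binom{2p^r-1}{(p^r-1)/2}.$$
The first identity exposes an extra factor $p^r$ in the denominator which, together with the explicit $p^r$ sitting inside $k(2k-1)=\tfrac{p^r+1}{2}\,p^r$, cancels two of the three powers of $p$ coming from the prefactor $p^{3r}$.

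After this cancellation I expect to reach
$$\theta_2\equiv -\,p^r\,\frac{\binom{2p^r-1}{p^r-1}^2\,(-4)^{(p^r+1)/2}\,(p^r-1)\,\binom{3p^r-1}{p^r-1}\,\binom{2p^r-1}{(p^r-1)/2}}{4\,(2p^r-1)^2\,4^{3p^r-3}\,\binom{p^r-1}{(p^r-1)/2}}\pmod{p^{r+2}},$$
so that the remaining fraction only needs to be evaluated modulo $p^2$. At this level I would substitute $\binom{2p^r-1}{p^r-1}\equiv1$ from (\ref{2prprp2}), $\binom{3p^r-1}{p^r-1}\equiv1$ (a byproduct of (\ref{mao}), since $\binom{3p^r-1}{p^r}=2\binom{3p^r-1}{p^r-1}$), and $p^r-1\equiv-1$, $(2p^r-1)^2\equiv1$ (valid because $r>1$), together with (\ref{2pr12pr}) for $\binom{2p^r-1}{(p^r-1)/2}$ and (\ref{pr12pr}) for $\binom{p^r-1}{(p^r-1)/2}$.

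The powers of $4$ in the numerator and denominator, combined with the $4^{p^r-1}$ produced by (\ref{pr12pr}), should collapse to a single factor $2^{-7(p^r-1)}$; the sign $(-1)^{(p^r-1)/2}$ from (\ref{2pr12pr}) cancels the one from (\ref{pr12pr}), and the remaining signs (the factor $-1$ from $p^r-1\equiv-1$ and the $(-1)^{(p^r+1)/2}$ from $(-4)^{(p^r+1)/2}$) multiply to $(-1)^{(p^r-1)/2}$. Finally I would invoke $2^{p^r-1}\equiv1+pq_p(2)\pmod{p^2}$ and $H_{(p-1)/2}\equiv-2q_p(2)\pmod p$ from (\ref{h12p}); the surviving fraction then reduces as
$$(-1)^{(p^r-1)/2}\,2^{-7(p^r-1)}\,(1-2pH_{(p-1)/2})\equiv(-1)^{(p^r-1)/2}(1-7pq_p(2))(1+4pq_p(2))\equiv(-1)^{(p^r-1)/2}(1-3pq_p(2))\pmod{p^2},$$
and restoring the prefactor $-p^r$ yields $\theta_2\equiv-(-1)^{(p^r-1)/2}p^r(1-3pq_p(2))\pmod{p^{r+2}}$, as claimed.

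The work here is almost entirely bookkeeping rather than conceptual, and that is where the one real obstacle lies: one must juggle the many powers of $2$ (from $4^{3p^r-3}$, from $(-4)^{(p^r+1)/2}$, and from the $4^{p^r-1}$ hidden inside (\ref{pr12pr})) and the several competing sign factors without slips, and check that the two Fermat-quotient contributions $-7pq_p(2)$ and $+4pq_p(2)$ genuinely combine to $-3pq_p(2)$. Everything else is a direct transcription of the argument already used for Lemma \ref{Gpr}.
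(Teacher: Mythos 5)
Your proposal is correct and follows essentially the same route as the paper: both reduce $\theta_2$ by elementary binomial identities to an expression in $\binom{2p^r-1}{p^r-1}$, $\binom{-2p^r-1}{p^r-1}$, $\binom{2p^r-1}{(p^r-1)/2}$ and $\binom{p^r-1}{(p^r-1)/2}$, then apply (\ref{pr12pr}), (\ref{2prprp2}), (\ref{2pr12pr}), (\ref{h12p}) and $2^{p^r-1}\equiv1+pq_p(2)\pmod{p^2}$ to collapse everything to $-(-1)^{(p^r-1)/2}p^r(1-7pq_p(2))(1+4pq_p(2))$. Your sign and power-of-two bookkeeping, including the $2^{-7(p^r-1)}$ factor and the derivation of $\binom{-2p^r-1}{p^r-1}\equiv1\pmod{p^2}$ from (\ref{mao}), matches the paper's computation.
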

\begin{proof}
By simple calculation, we have
\begin{align*}
\theta_2=\frac{(-1)^{(p^r-1)/2}\binom{2p^r-1}{p^r-1}^2(p^r-1)p^r}{(2p^r-1)^24^{3p^r-2-(p^r+1)/2}}\frac{\binom{-2p^r-1}{p^r-1}\binom{2p^r-1}{(p^r-1)/2}}{\binom{p^r-1}{(p^r-1)/2}}
\end{align*}
In the same way of computing (\ref{mao}), we can deduce that
$$
\binom{-2p^r-1}{p^r-1}\equiv1\pmod{p^2}.
$$
This, with (\ref{pr12pr}), (\ref{2prprp2}) and (\ref{2pr12pr}) yield that
\begin{align*}
\theta_2&\equiv(-1)^{(p^r-1)/2}p^r\frac{p^r-1}{(2p^r-1)^22^{7(p^r-1)}}(1+4pq_p(2))\\
&\equiv-(-1)^{(p^r-1)/2}p^r\frac{1+4pq_p(2)}{1+7pq_p(2)}\equiv-(-1)^{(p^r-1)/2}p^r(1-3pq_p(2))\pmod{p^{r+2}},
\end{align*}
where we used that $2^{p^r-1}=(1+pq_p(2))^{(p^r-1)/(p-1)}\equiv1+pq_p(2)\pmod{p^2}$.
\end{proof}
\begin{lem}\label{theta3}
\begin{align*}
\theta_3&=-\frac{p^{3r}\binom{2p^r-1}{p^r-1}^2}{(2p^r-1)4^{3p^r-3}}\sum_{k=(p^r+3)/2}^{p^r-1}\frac{(-4)^{k}}{\binom{2k}{k}}\frac{\binom{-2p^r-1}{2k-2}}{k(2k-1)}\binom{2p^r-2}{p^r-k-1}
\\&\equiv-(-1)^{(p^r-1)/2}3p^{r+1}q_p(2)\pmod{p^{r+2}}.
\end{align*}
\end{lem}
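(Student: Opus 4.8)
The plan is to follow the argument of Lemma~\ref{Gpr12pr} essentially verbatim, since $\theta_3$ is the exact analogue, for the present WZ pair, of the tail sum $\sum_{k=(p^r+3)/2}^{p^r-1}G(p^r,k)$ handled there. First I would eliminate the factor $1/\binom{2k}k$ over the range $(p^r+3)/2\le k\le p^r-1$. Putting $l=p^r-k$, so that $0<l<p^r/2$, the congruence \eqref{dao2kk} lets me trade $-2p^r/\binom{2k}k$ for $(p^r-k)\binom{2p^r-2k}{p^r-k}$; this absorbs one power of $p^r$ from the prefactor $p^{3r}$ and, by \eqref{2kk}, introduces a central binomial $\binom{2p^r-2k}{p^r-k}\equiv0\pmod p$.

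Next I would reduce the remaining binomial coefficients modulo $p$ by the same computations already recorded in the excerpt. The factor $\binom{-2p^r-1}{2k-2}$ is treated as in \eqref{fu2pr}: writing it as $\prod_{i=1}^{2k-2}(1+2p^r/i)$ and noting that over the relevant range of $2k-2$ the only anomalous index is $i=p^r$, one gets $\binom{-2p^r-1}{2k-2}\equiv3\pmod p$. Using $\binom{2p^r-1}{p^r-1}\equiv1\pmod{p^2}$ from \eqref{2prprp2}, the reduction $\binom{2p^r-1}{k}\equiv(-1)^k\pmod p$ of \eqref{2prfu1}, and an index reversal $k\mapsto p^r-k$, the expression collapses to a single sum over $1\le k\le(p^r-3)/2$ carrying an overall factor $p^{2r}$ and a denominator $(p^r-k)(2p^r-2k-1)$.

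The decisive step is to pin down which terms survive modulo $p^{r+2}$. The denominator $(p^r-k)(2p^r-2k-1)$ is divisible by $p^{r-1}$ precisely for the indices with $2k+1=p^{r-1}j$, $j\in\{1,\dots,p-1\}$, $2\nmid j$; every other term has strictly higher $p$-adic valuation and is discarded. Because the prefactor is $p^{2r}$ and each surviving term loses $p^{r-1}$ from the denominator, exactly one power of $p$ beyond $p^r$ is retained, which is why the answer sits at level $p^{r+1}$. On the surviving terms I would evaluate the residual binomial by \eqref{2jpr}, namely $\binom{2jp^{r-1}-p^{r-1}-1}{jp^{r-1}-(p^r+1)/2}\equiv(-1)^{(p^{r-1}-1)/2}\binom{2j-2}{j-1}\pmod p$, which after the shift $j\mapsto j+1$ rewrites the sum as $\sum_{j=0}^{(p-3)/2}\binom{2j}{j}/((2j+1)4^{j})$.

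Finally I would apply Lemma~\ref{sun1} to replace that sum by $-(-1)^{(p-1)/2}q_p(2)\pmod{p^2}$, use $2^{p^r-1}\equiv1+pq_p(2)\pmod{p^2}$ to clear the powers of $4$, and combine the signs via $(-1)^{(p^{r-1}-1)/2}(-1)^{(p-1)/2}=(-1)^{(p^r-1)/2}$; the leading minus in the definition of $\theta_3$ then produces the stated value $-(-1)^{(p^r-1)/2}3p^{r+1}q_p(2)$. The main obstacle is the valuation bookkeeping in the third step: one must balance the prefactor $p^{3r}$ (later $p^{2r}$) against the $p$-divisible denominator $\binom{2k}k$ and against the selection of the $p^{r-1}$-divisible indices, so that precisely the $p^{r+1}$ contribution is isolated; keeping all of these orders mutually consistent, rather than any single congruence, is where the care lies.
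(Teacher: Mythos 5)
Your proposal follows the paper's own proof of Lemma \ref{theta3} essentially step for step: eliminate $1/\binom{2k}{k}$ via (\ref{2ll2kk})--(\ref{dao2kk}), reverse the summation index, reduce the auxiliary binomials modulo $p$ by (\ref{2prfu1}) and (\ref{fu2pr}), isolate the indices with $2k+1=p^{r-1}j$, and finish with (\ref{2jpr}) and Lemma \ref{sun1}, so the route is the same and the conclusion is correct. One small imprecision to repair when writing it out: the denominator $(p^r-k)(2p^r-2k-1)$ is \emph{also} divisible by $p^{r-1}$ when $p^{r-1}\mid k$, but those terms still die modulo $p^{r+2}$ because the numerator carries a compensating factor $k$ (equivalently $k/(p^r-k)$ is a $p$-adic unit, which is how the paper reduces the denominator to $2p^r-2k-1$ alone), so your list of surviving terms is nonetheless the right one.
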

\begin{proof}
Note that $\frac{p^r}{k(2k-1)}\equiv0\pmod p$ for all $(p^r+3)/2\leq k\leq p^r-1$, so with (\ref{2ll2kk}) and Fermat's little theorem, we have
\begin{align*}
\theta_3&\equiv\frac{p^{2r}\binom{2p^r-1}{p^r-1}^2}{2(2p^r-1)4^{3p^r-3}}\sum_{k=(p^r+3)/2}^{p^r-1}(-4)^k(p^r-k)\binom{2p^r-2k}{p^r-k}\frac{\binom{-2p^r-1}{2k-2}}{k(2k-1)}\binom{2p^r-2}{p^r-k-1}\\
&\equiv \frac{p^{2r}}2\sum_{k=1}^{(p^r-3)/2}\frac{(-4)^{p^r-k}\binom{2k}{k}}{2p^r-2k-1}\binom{-2p^r-1}{2p^r-2k-2}\binom{2p^r-2}{k-1}\\
&\equiv-2p^{2r}\sum_{k=1}^{(p^r-3)/2}\frac{k\binom{2k}k}{(2k+1)(-4)^k}\binom{-2p^r-1}{2p^r-2k-2}\binom{2p^r-1}{k}\pmod{p^{r+2}}.
\end{align*}
This, with (\ref{2prfu1}) and (\ref{fu2pr}) yield that
$$
\theta_3\equiv-6p^{2r}\sum_{k=1}^{(p^r-3)/2}\frac{k\binom{2k}k}{(2k+1)(4)^k}\pmod{p^{r+2}}.
$$
We just need to compute these items with $2k+1=p^{r-1}j, 2\nmid j$. So modulo $p^{r+2}$ we have
$$
\theta_3\equiv-6p^{2r}\sum_{j=1,2\nmid j}^{p-1}\frac{\frac{p^{r-1}j-1}2\binom{p^{r-1}j-1}{\frac{p^{r-1}j-1}2}}{2^{p^{r-1}j-1}p^{r-1}j}\equiv3p^{r+1}\sum_{j=1,2\nmid j}^{p-1}\frac{\binom{p^{r-1}j-1}{\frac{p^{r-1}j-1}2}}{j2^{j-1}}=3p^{r+1}\sum_{j=0}^{(p-3)/2}\frac{\binom{2jp^{r-1}+p^{r-1}-1}{jp^{r-1}+(p^{r-1}-1)/2}}{(2j+1)4^{j}}.
$$
Hence with (\ref{2jpr}), we immediately get that
$$
\theta_3\equiv(-1)^{(p^{r-1}-1)/2}3p^{r+1}\sum_{j=0}^{(p-3)/2}\frac{\binom{2j}j}{(2j+1)4^{j}}\pmod{p^{r+2}}.
$$
This, with Lemma \ref{sun1} and $(-1)^{\frac{p^{r-1}-1}2}(-1)^{\frac{p-1}2}=(-1)^{\frac{p^{r}-1}2}$ yield the desired result.
\end{proof}
\noindent{\it Proof of Theorem \ref{Thglarxiv2}}. Substituting Lemmas \ref{theta1}--\ref{theta3} into (\ref{prGpr}), we have
$$
\sum_{k=1}^{p^r-1}G(p^r,k)\equiv-(-1)^{(p^r-1)/2}p^r\pmod{p^{r+2}}.
$$
This, with Lemma \ref{F2prpr} and (\ref{wz3}) yield that
$$
\sum_{n=0}^{p^r-1}F(n,0)\equiv-(-1)^{(p^r-1)/2}p^r\pmod{p^{r+2}}.
$$
Now we finish the proof of Theorem \ref{Thglarxiv2}.\qed

\section{Proof of Theorem \ref{Th20n3}}
We will use the following WZ pair which can be found in \cite{zudilin-jnt-2009} or \cite{sun-ijm-2012} to prove Theorem \ref{Th20n3}. For nonnegative integers $n, k$, define
$$
F(n,k)=\frac{(-1)^{n+k}(20n-2k+3)}{4^{5n-k}}\binom{2n}n\frac{\binom{4n+2k}{2n+k}\binom{2n+k}{2k}\binom{2n-k}{n}}{\binom{2k}k}
$$
and
$$
G(n,k)=\frac{(-1)^{n+k}}{4^{5n-4-k}}\frac{n\binom{2n-1}{n-1}\binom{2(2n-1+k)}{2n-1+k}\binom{2n-1+k}{2k}\binom{2n-1-k}{n-1}}{\binom{2k}k}.
$$
Clearly $F(n,k)=G(n,k)=0$ if $n<k$. It is easy to check that
\begin{equation}\label{FG5}
F(n,k-1)-F(n,k)=G(n+1,k)-G(n,k)
\end{equation}
for all nonnegative integer $n$ and $k>0$.

We mentioned that Zudilin has proved the theorem for $r=1$, so we just need to show that for $r>1$.

Summing (\ref{FG5}) over $n$ from $0$ to $p^r-1$ we have
$$
\sum_{n=0}^{p^r-1}F(n,k-1)-\sum_{n=0}^{p^r-1}F(n,k)=G(p^r,k)-G(0,k)=G(p^r,k).
$$
Furthermore, summing both side of the above identity over $k$ from $1$ to $p^r-1$, we obtain
\begin{align}\label{wz5}
\sum_{n=0}^{p^r-1}F(n,0)=F(p^r-1,p^r-1)+\sum_{k=1}^{p^r-1}G(p^r,k).
\end{align}
\begin{lem} \label{Fp1p13}
$$F(p^r-1,p^r-1)\equiv 0\pmod{p^{r+2}}.$$
\end{lem}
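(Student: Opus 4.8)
The plan is to evaluate $F(p^r-1,p^r-1)$ directly from the definition of the WZ pair and then read off its $p$-adic valuation. Setting $n=k=p^r-1$, the factor $\binom{2n-k}{n}$ collapses to $\binom{p^r-1}{p^r-1}=1$, the prefactor $\binom{2n}{n}$ cancels the denominator $\binom{2k}{k}=\binom{2p^r-2}{p^r-1}$, and $(-1)^{n+k}=1$. After this bookkeeping one is left with
$$
F(p^r-1,p^r-1)=\frac{18p^r-15}{4^{4p^r-4}}\binom{6p^r-6}{3p^r-3}\binom{3p^r-3}{2p^r-2}.
$$
Since the denominator $4^{4p^r-4}$ is a $p$-adic unit and $18p^r-15$ is a $p$-adic integer, the whole problem reduces to bounding the $p$-adic order of the product of the two binomial coefficients from below by $r+2$.

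Next, I would show that each of the two binomial coefficients is divisible by $p^r$. For the non-central one, $\binom{3p^r-3}{2p^r-2}=\binom{3p^r-3}{p^r-1}$, I would use a shift identity analogous to the relation $\binom{2p^r-2}{p^r-1}=\frac{p^r}{2p^r-1}\binom{2p^r-1}{p^r-1}$ used earlier, namely
$$
\binom{3p^r-3}{p^r-1}=\frac{p^r(2p^r-1)}{(3p^r-1)(3p^r-2)}\binom{3p^r-1}{p^r},
$$
where $\binom{3p^r-1}{p^r}$ is a $p$-adic unit (by Lucas' theorem, or Kummer's theorem, since no carries occur when adding $p^r$ to $2p^r-1$ in base $p$) and the remaining rational factors are units; this extracts exactly $p^r$. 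For the central coefficient $\binom{6p^r-6}{3p^r-3}$ I would invoke Kummer's theorem and count the base-$p$ carries in $(3p^r-3)+(3p^r-3)$, exactly in the spirit of the $\mathrm{ord}_p$ computation in Lemma \ref{Lemwzfin2}. Writing $3p^r-3$ in base $p$ gives digits $2$ at the top, a block of $(p-1)$'s in the middle, and $p-3$ in the units place, and a short case split ($p=3$, $p=5$, $p\ge 7$) shows the number of carries is at least $r$ as soon as $r\ge 2$, so that $p^r\mid\binom{6p^r-6}{3p^r-3}$.

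Finally, combining the two divisibilities gives $p^{2r}\mid\binom{6p^r-6}{3p^r-3}\binom{3p^r-3}{2p^r-2}$, hence $p^{2r}\mid F(p^r-1,p^r-1)$. Because the lemma is needed only for $r>1$, we have $2r\ge r+2$, and therefore $F(p^r-1,p^r-1)\equiv 0\pmod{p^{r+2}}$, as claimed.

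I expect the main obstacle to be the valuation of the central binomial $\binom{6p^r-6}{3p^r-3}$: the number of base-$p$ carries genuinely depends on $p$ (for $p=5$ the units-digit addition produces no carry while the top-digit addition does, whereas for $p\ge 7$ the situation is reversed), and for $p=5$ one needs $r\ge 2$ for the carry count to reach $r$. Thus the hypothesis $r>1$ is exactly what rescues the borderline small-prime cases, and the argument must use it essentially rather than treat it as a convenience.
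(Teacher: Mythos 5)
Your proof is correct and follows essentially the same route as the paper: both evaluate $F(p^r-1,p^r-1)$ explicitly, reduce to the product $\binom{6p^r-6}{3p^r-3}\binom{3p^r-3}{2p^r-2}$ (the sign, the power of $4$, and $\binom{2n}{n}/\binom{2k}{k}$ all being harmless), and show this product is divisible by $p^{2r}$, which suffices because $r>1$. The only difference is cosmetic: the paper extracts both factors of $p^r$ by index-shifting binomial identities, whereas you pull one out the same way and obtain the other, for the central coefficient $\binom{6p^r-6}{3p^r-3}$, by counting base-$p$ carries via Kummer's theorem -- your case analysis there (including the borderline $p=5$, $r=1$ case excluded by the hypothesis) is accurate.
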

\begin{proof} Since $r>1$, we have
\begin{align*}
F(p^r-1,p^r-1)&=\frac{3(6p^r-5)}{4^{4p^r-4}}\binom{6p^r-6}{3p^r-3}\binom{3p^r-3}{p^r-1}=\frac{3p^{r}}{4^{4p^r-4}}\binom{6p^r-5}{3p^r-3}\binom{3p^r-2}{2p^r-2}\\
&=\frac{3p^{r}}{4^{4p^r-3}}\binom{6p^r-3}{3p^r-1}\binom{3p^r-1}{2p^r-1}\\&=\frac{9p^{2r}}{2\cdot4^{4p^r-3}}\binom{6p^r-1}{3p^r-1}\binom{3p^r-1}{2p^r-1}\frac1{6p^r-1}\equiv0\pmod{p^{r+2}}.
\end{align*}
\end{proof}
By the definition of $G(n,k)$ we have
\begin{align}\label{Gpk3}
G(p^r,k)&=\frac{(-1)^{k+1}p^r\binom{2p^r-1}{p^r-1}}{4^{5p^r-4-k}}\binom{4p^r-2+2k}{2p^r-1+k}\frac{\binom{2p^r-1+k}{2k}\binom{2p^r-1-k}{p^r-1}}{\binom{2k}k}\notag\\
&=\frac{(-1)^{k+1}p^r\binom{2p^r-1}{p^r-1}}{4^{5p^r-4-k}}\binom{4p^r-2+2k}{2k}\frac{\binom{4p^r-2}{2p^r-k-1}\binom{2p^r-1-k}{p^r-1}}{\binom{2k}k}\notag\\
&=\frac{(-1)^{k+1}p^r\binom{2p^r-1}{p^r-1}}{4^{5p^r-4-k}}\binom{4p^r-2+2k}{2k}\frac{\binom{4p^r-2}{3p^r-1}\binom{3p^r-1}{p^r-k}}{\binom{2k}k}\notag\\
&=\frac{(-1)^{k+1}\binom{2p^r-1}{p^r-1}\binom{4p^r-1}{p^r-1}}{4^{5p^r-4-k}}\frac{6p^{3r}}{k(2k-1)}\frac{\binom{-4p^r-1}{2k-2}\binom{3p^r-1}{p^r-k}}{\binom{2k}k},
\end{align}
where we used the binomial transformation
$$
\binom{n}{k}\binom{k}{j}=\binom{n}{j}\binom{n-j}{k-j}\ \mbox{and}\ \binom{n}{k}=(-1)^k\binom{-n+k-1}{k}.
$$
\begin{lem} \label{G123}
$$
\sum_{k=1}^{(p^r-1)/2}G(p^r,k)\equiv 0\pmod{p^{r+2}}.
$$
\end{lem}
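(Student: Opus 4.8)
The plan is to prove the stronger, term-wise statement that $G(p^r,k)\equiv0\pmod{p^{r+2}}$ for every $1\le k\le(p^r-1)/2$, so that the sum vanishes modulo $p^{r+2}$ with no cancellation needed; this is exactly the mechanism used for $\theta_1$ in Lemma \ref{theta1}. Working from the closed form (\ref{Gpk3}), I first isolate the factors that carry no negative $p$-adic weight: $\binom{2p^r-1}{p^r-1}$, $\binom{4p^r-1}{p^r-1}$, $\binom{3p^r-1}{p^r-k}$ and $6$ are $p$-integral, the power $4^{-(5p^r-4-k)}$ is a $p$-adic unit, and $\binom{-4p^r-1}{2k-2}=\binom{4p^r+2k-2}{2k-2}=\prod_{i=1}^{2k-2}(1+4p^r/i)\equiv1\pmod p$ because $\mathrm{ord}_p(4p^r/i)\ge1$ for every $1\le i\le2k-2<p^r$. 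Thus all the required divisibility must come from the quotient $p^{3r}/(k(2k-1)\binom{2k}k)$.

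The key step is the factorization
\[
\frac{p^{3r}}{k(2k-1)\binom{2k}k}=p^r\cdot\frac{p^r}{k(2k-1)}\cdot\frac{p^r}{\binom{2k}k},
\]
together with the claims that each of the last two factors is $\equiv0\pmod p$. For the middle factor, since $p$ is odd the integers $k$ and $2k-1$ cannot both be divisible by $p$, and each is at most $p^r-1$, so $\mathrm{ord}_p(k(2k-1))\le r-1$ and hence $\mathrm{ord}_p(p^r/(k(2k-1)))\ge1$. For the last factor I would use (\ref{dao2kk}) and (\ref{2kk}): these give $-2p^r/(k\binom{2k}k)\equiv\binom{2p^r-2k}{p^r-k}\equiv0\pmod p$, and multiplying by the $p$-integral quantity $-k/2$ yields $\mathrm{ord}_p(p^r/\binom{2k}k)\ge1$. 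Combining the three factors gives $\mathrm{ord}_p(p^{3r}/(k(2k-1)\binom{2k}k))\ge r+1+1=r+2$, and together with the $p$-integral factors above this proves $G(p^r,k)\equiv0\pmod{p^{r+2}}$, hence the lemma.

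The step I expect to be the main obstacle is the valuation bookkeeping in the denominator, specifically ruling out that a high power of $p$ dividing $2k-1$ (which may be as large as $p^{r-1}$) could swallow the available powers of $p$ in the numerator. This is precisely what the two observations above control: $\mathrm{ord}_p(k(2k-1))\le r-1$ prevents $k(2k-1)$ from absorbing too much, while the central binomial $\binom{2k}k$ in the denominator is tamed by relating it through (\ref{dao2kk}) to the complementary coefficient $\binom{2p^r-2k}{p^r-k}$, which is divisible by $p$ by (\ref{2kk}). Unlike Lemmas \ref{Gpr} and \ref{Gpr12pr}, no arithmetic input beyond these divisibility facts (in particular no Fermat quotient evaluation) is needed here.
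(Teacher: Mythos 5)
Your proof is correct and takes essentially the same approach as the paper: the paper's own (very terse) argument for this lemma likewise works termwise, reducing everything to $p^r/(k(2k-1))\equiv0\pmod p$ together with (\ref{2kk}) and (\ref{dao2kk}), and you have simply made explicit the valuation bookkeeping (in particular $\mathrm{ord}_p\bigl(k(2k-1)\bigr)\le r-1$ and the $p$-integrality of the remaining factors) that the paper leaves implicit.
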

\begin{proof}
By (\ref{Gpk3}), we have
\begin{align*}
\sum_{k=1}^{(p^r-1)/2}G(p^r,k)=-\frac{\binom{2p^r-1}{p^r-1}\binom{4p^r-1}{p^r-1}}{4^{5p^r-4}}\sum_{k=1}^{(p^r-1)/2}\frac{6p^{3r}(-4)^k}{k(2k-1)}\frac{\binom{-4p^r-1}{2k-2}\binom{3p^r-1}{p^r-k}}{\binom{2k}k}.
\end{align*}
It is easy to see that $p^r/(k(2k-1))\equiv0\pmod p$, this with (\ref{2kk}) and (\ref{dao2kk}) yield that
$$\sum_{k=1}^{(p^r-1)/2}G(p^r,k)\equiv 0\pmod{p^{r+2}}.$$
\end{proof}
\begin{lem}\label{Gpr3}
$$
G(p^r,(p^r+1)/2)\equiv(-1)^{(p^r-1)/2}3p^r(1-5pq_p(2))\pmod{p^{r+2}},
$$
where $q_p(2)=(2^{p-1}-1)/p$ stands for the Fermat quotient.
\end{lem}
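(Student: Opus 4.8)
The plan is to evaluate $G(p^r,(p^r+1)/2)$ directly from the closed form (\ref{Gpk3}), in complete analogy with the treatment of $G(p^r,(p^r+1)/2)$ in Lemma \ref{Gpr} of Section~3. Setting $k=(p^r+1)/2$ makes $2k-1=p^r$, $2k=p^r+1$, $2k-2=p^r-1$ and $p^r-k=(p^r-1)/2$, so the prefactor $\frac{6p^{3r}}{k(2k-1)}$ collapses to $\frac{12p^{2r}}{p^r+1}$, the sign $(-1)^{k+1}$ becomes $(-1)^{(p^r-1)/2}$, and the binomial in the denominator becomes $\binom{2k}{k}=\binom{p^r+1}{(p^r+1)/2}$. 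First I would apply the elementary identity $\binom{p^r+1}{(p^r+1)/2}=\frac{4p^r}{p^r+1}\binom{p^r-1}{(p^r-1)/2}$ to rewrite this quotient; this both cancels the stray $p^r+1$ and drops the power $p^{2r}$ down to $p^r$, leaving
\[
G(p^r,(p^r+1)/2)=\frac{(-1)^{(p^r-1)/2}\,3p^r\binom{2p^r-1}{p^r-1}\binom{4p^r-1}{p^r-1}\binom{-4p^r-1}{p^r-1}\binom{3p^r-1}{(p^r-1)/2}}{4^{9(p^r-1)/2}\,\binom{p^r-1}{(p^r-1)/2}},
\]
where the explicit power $4^{5p^r-4-(p^r+1)/2}=4^{9(p^r-1)/2}$ comes straight from (\ref{Gpk3}).

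Since the factor $p^r$ has now been extracted, it suffices to pin down every remaining factor modulo $p^2$. I would invoke (\ref{2prprp2}) for $\binom{2p^r-1}{p^r-1}\equiv1$, and reuse the method of (\ref{mao}): writing $\binom{4p^r-1}{p^r-1}=\prod_{i=1}^{p^r-1}\bigl(1+\tfrac{3p^r}{i}\bigr)$ and $\binom{-4p^r-1}{p^r-1}=\binom{5p^r-1}{p^r-1}=\prod_{i=1}^{p^r-1}\bigl(1+\tfrac{4p^r}{i}\bigr)$, both reduce to $1\pmod{p^2}$ via $p^rH_{p^r-1}\equiv pH_{p-1}\equiv0\pmod{p^2}$. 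The one genuinely new evaluation is the central binomial: $\binom{3p^r-1}{(p^r-1)/2}=(-1)^{(p^r-1)/2}\prod_{i=1}^{(p^r-1)/2}\bigl(1-\tfrac{3p^r}{i}\bigr)\equiv(-1)^{(p^r-1)/2}\bigl(1-3pH_{(p-1)/2}\bigr)\pmod{p^2}$, which is the exact analog of (\ref{2pr12pr}) and rests on the auxiliary congruence $p^rH_{(p^r-1)/2}\equiv pH_{(p-1)/2}\pmod{p^2}$ (proved by peeling off the terms of $H_{(p^r-1)/2}$ divisible by $p$ and inducting on $r$). Finally I would apply (\ref{pr12pr}) to replace $\binom{p^r-1}{(p^r-1)/2}$ by $(-1)^{(p^r-1)/2}4^{p^r-1}$, which pushes the total power of $4$ in the denominator up to $4^{11(p^r-1)/2}=2^{11(p^r-1)}$.

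Assembling these pieces, the three signs $(-1)^{(p^r-1)/2}$ collapse to a single $(-1)^{(p^r-1)/2}$, and I would close with $2^{p^r-1}\equiv1+pq_p(2)\pmod{p^2}$ together with (\ref{h12p}). Concretely $2^{-11(p^r-1)}\equiv1-11pq_p(2)$ and $1-3pH_{(p-1)/2}\equiv1+6pq_p(2)$, whose product is $1-5pq_p(2)\pmod{p^2}$, so that $G(p^r,(p^r+1)/2)\equiv(-1)^{(p^r-1)/2}3p^r(1-5pq_p(2))\pmod{p^{r+2}}$, as claimed. I expect the main obstacle to be purely clerical: correctly tallying the exponent of $4$ across (\ref{Gpk3}) and (\ref{pr12pr}), and tracking the Fermat-quotient contributions so that the mismatched coefficients $-11$ and $+6$ combine to exactly $-5$; a single slip in the power of two would corrupt the $q_p(2)$ term even though the leading $(-1)^{(p^r-1)/2}3p^r$ would survive.
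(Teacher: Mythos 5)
Your proposal is correct and follows essentially the same route as the paper: substitute $k=(p^r+1)/2$ into (\ref{Gpk3}), reduce $(p^r+1)\binom{p^r+1}{(p^r+1)/2}$ to $4p^r\binom{p^r-1}{(p^r-1)/2}$, evaluate the four remaining binomial factors modulo $p^2$ via (\ref{2prprp2}), the product-expansion trick of (\ref{mao}), the analogue of (\ref{2pr12pr}) giving $\binom{3p^r-1}{(p^r-1)/2}\equiv(-1)^{(p^r-1)/2}(1+6pq_p(2))$, and (\ref{pr12pr}), then combine $2^{-11(p^r-1)}\equiv1-11pq_p(2)$ with $1+6pq_p(2)$ to get $1-5pq_p(2)$. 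Your exponent bookkeeping ($4^{9(p^r-1)/2}\cdot4^{p^r-1}=2^{11(p^r-1)}$) and the final coefficient arithmetic match the paper's computation exactly.
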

\begin{proof}
By (\ref{Gpk3}) and (\ref{pr12pr})--(\ref{h12p}) we have
\begin{align*}
G(p^r,(p^r+1)/2)&=\frac{(-1)^{(p^r-1)/2}12p^{2r}}{4^{5p^r-4-(p^r+1)/2}}\frac{\binom{2p^r-1}{p^r-1}\binom{4p^r-1}{p^r-1}\binom{-4p^r-1}{p^r-1}\binom{3p^r-1}{(p^r-1)/2}}{(p^r+1)\binom{p^r+1}{(p^r+1)/2}}\\
&\equiv\frac{(-1)^{(p^r-1)/2}3p^{r}}{4^{5p^r-4-(p^r+1)/2}}\frac{(-1)^{(p^r-1)/2}(1+6pq_p(2))}{(-1)^{(p^r-1)/2}4^{p^r-1}}\pmod{p^{r+2}}.
\end{align*}
Hence
$$
G(p^r,(p^r+1)/2)\equiv\frac{(-1)^{(p^r-1)/2}3p^{r}(1+6pq_p(2))}{2^{11(p^r-1)}}\pmod{p^{r+2}}.
$$
Therefore the desired result immediately obtained since
$$2^{p^r-1}\equiv1+pq_p(2)\pmod{p^2}.$$
So the proof of Lemma \ref{Gpr3} is finished.
\end{proof}
\begin{lem}\label{Gpr12pr3}
$$\sum_{k=(p^r+3)/2}^{p^r-1}G(p^r,k)\equiv(-1)^{(p^r-1)/2}15p^{r+1}q_p(2)\pmod{p^{r+2}}.$$
\end{lem}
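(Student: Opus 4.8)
The plan is to reproduce verbatim the strategy already carried out for Lemma \ref{Gpr12pr} and Lemma \ref{theta3}, since after substituting the closed form (\ref{Gpk3}) for $G(p^r,k)$ the tail $\sum_{k=(p^r+3)/2}^{p^r-1}G(p^r,k)$ has the identical shape. First I would observe that in the range $(p^r+3)/2\le k\le p^r-1$ the quantity $p^r/(k(2k-1))$ is $\equiv 0\pmod p$, so the explicit factor $6p^{3r}$ appearing in (\ref{Gpk3}) already carries a large power of $p$. Next I would use (\ref{2kk}) together with (\ref{2ll2kk}), in the form (\ref{dao2kkhb}), to trade the denominator $\binom{2k}k$ for $(p^r-k)\binom{2p^r-2k}{p^r-k}$ at the cost of one factor $p^r$, turning the leading $6p^{3r}$ into $3p^{2r}$. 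Modulo $p^{r+2}$ this replaces the sum by a $p^{2r}$-multiple of a sum of products of binomial coefficients with no denominators.

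The next step is the change of variable $k\mapsto p^r-k$, which sends the range $(p^r+3)/2\le k\le p^r-1$ to $1\le k\le(p^r-3)/2$ and rewrites $\binom{2p^r-2k}{p^r-k}$ as $\binom{2k}k$, $\binom{3p^r-1}{p^r-k}$ as $\binom{3p^r-1}{k}$, and $\binom{-4p^r-1}{2k-2}$ as $\binom{-4p^r-1}{2p^r-2k-2}$. I would then reduce the spectator binomials modulo $p$ by the device of (\ref{2prfu1}) and (\ref{fu2pr}): here $\binom{3p^r-1}{k}\equiv(-1)^k\pmod p$, while the product $\prod_i\left(1+\tfrac{4p^r}{i}\right)$ defining $\binom{-4p^r-1}{2p^r-2k-2}$ collapses to its singular contribution at $i=p^r$, namely $1+4=5$, so that $\binom{-4p^r-1}{2p^r-2k-2}\equiv 5\pmod p$. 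This $5$, together with the overall constant $3$ coming from the halved leading factor, is the source of the $15=3\cdot 5$ in the statement (compare the $5$ already visible in Lemma \ref{Gpr3}). After this reduction the tail becomes $C\,p^{2r}\sum_{k=1}^{(p^r-3)/2}\frac{k\binom{2k}k}{(2k+1)4^k}$ for an explicit constant $C$.

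Finally, only the terms with $2k+1=p^{r-1}j$ and $2\nmid j$ survive the extra $p$-divisibility, so I would isolate them, apply (\ref{2jpr}) to reduce the resulting binomial to $(-1)^{(p^{r-1}-1)/2}\binom{2j}{j}$ up to an explicit factor, and thereby rewrite the whole expression as $(-1)^{(p^{r-1}-1)/2}\,15\,p^{r+1}\sum_{j=0}^{(p-3)/2}\frac{\binom{2j}j}{(2j+1)4^j}$. Lemma \ref{sun1} evaluates this inner sum as $-(-1)^{(p-1)/2}q_p(2)$, and the identity $(-1)^{(p^{r-1}-1)/2}(-1)^{(p-1)/2}=(-1)^{(p^r-1)/2}$ then yields the claimed congruence. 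The main obstacle will be the bookkeeping of the powers of $p$ and of the numerical constants and signs through the index shift and the several mod-$p$ binomial reductions: I must verify that every discarded term is genuinely $\equiv 0\pmod{p^{r+2}}$ and that the surviving coefficient is precisely $15$ rather than some other small multiple, which requires tracking the various factors of $2$ introduced by (\ref{dao2kkhb}) and by the substitution $2k+1=p^{r-1}j$.
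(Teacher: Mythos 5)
Your plan coincides step for step with the paper's own proof: substitute the closed form (\ref{Gpk3}), use (\ref{2ll2kk})/(\ref{dao2kk}) to clear the $\binom{2k}k$ denominator, reindex $k\mapsto p^r-k$, reduce the spectator binomials modulo $p$ (with $\binom{-4p^r-1}{2p^r-2k-2}\equiv5$ supplying the factor $5$), isolate the terms with $2k+1=p^{r-1}j$, and finish with (\ref{2jpr}) and Lemma \ref{sun1}. The only caveat is the bookkeeping you already flag: your intermediate sum carries an extra factor $k$ in the numerator (contributing $\equiv-\tfrac12$ on the surviving terms) relative to the paper's $\sum\binom{2k}k/\bigl(4^k(2k+1)\bigr)$, so the overall sign and the constant $15$ must be rechecked when pinning down $C$.
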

\begin{proof}
In view of (\ref{Gpk3}), we have
$$
\sum_{k=(p^r+3)/2}^{p^r-1}G(p^r,k)=-\frac{\binom{2p^r-1}{p^r-1}\binom{4p^r-1}{p^r-1}}{4^{5p^r-4}}\sum_{k=(p^r+3)/2}^{p^r-1}\frac{6p^{3r}(-4)^k}{k(2k-1)}\frac{\binom{-4p^r-1}{2k-2}\binom{3p^r-1}{p^r-k}}{\binom{2k}k}.
$$
It is easy to see that $p^r/(k(2k-1))\equiv0\pmod p$ for all $(p^r+3)/2\leq k<p^r$, and with (\ref{dao2kk}) we have
\begin{align*}
&\sum_{k=(p^r+3)/2}^{p^r-1}G(p^r,k)\\
&\equiv\frac{\binom{2p^r-1}{p^r-1}\binom{4p^r-1}{p^r-1}}{4^{5p^r-4}}\sum_{k=(p^r+3)/2}^{p^r-1}\frac{3p^{2r}(-4)^k}{k(2k-1)}\binom{-4p^r-1}{2k-2}\binom{3p^r-1}{p^r-k}\binom{2p^r-2k}{p^r-k}(p^r-k)\\
&\equiv-3p^r\sum_{k=1}^{(p^r-3)/2}\frac{p^{r}\binom{2k}k}{(-4)^k(2k+1)}\binom{-4p^r-1}{2p^r-2k-2}\binom{3p^r-1}{k}\pmod{p^{r+2}}.
\end{align*}
It is easy to see that for each $1\leq k\leq (p^r-3)/2$, we have
\begin{align}\label{3prfu1}
\binom{3p^r-1}{k}\equiv(-1)^k\pmod p
\end{align}
and
\begin{align}\label{fu4pr}
\binom{-4p^r-1}{2p^r-2k-2}=\prod_{i=1}^{2p^r-2k-2}\left(1+\frac{4p^r}{i}\right)=5\prod_{i=1,i\neq p^r}^{2p^r-2k-2}\left(1+\frac{2p^r}{i}\right)\equiv5\pmod p.
\end{align}
Thus
$$
\sum_{k=(p^r+3)/2}^{p^r-1}G(p^r,k)\equiv-15p^{2r}\sum_{k=1}^{(p^r-3)/2}\frac{\binom{2k}{k}}{4^k(2k+1)}\pmod{p^{r+2}}.
$$
As the same way of proving Lemma \ref{Gpr12pr}, we have
$$
\sum_{k=(p^r+3)/2}^{p^r-1}G(p^r,k)\equiv-15p^{r+1}(-1)^{(p^{r-1}-1)/2}\sum_{j=0}^{(p-3)/2}\frac{\binom{2j}{j}}{(2j+1)4^{j}}\pmod{p^{r+2}}.
$$
This, with Lemma \ref{sun1} and $(-1)^{\frac{p^{r-1}-1}2}(-1)^{\frac{p-1}2}=(-1)^{\frac{p^{r}-1}2}$ yield the desired result.
\end{proof}
\noindent{\it Proof of Theorem \ref{Th20n3}}. Combining (\ref{wz5}) with Lemmas \ref{Fp1p13}--\ref{Gpr12pr3}, we immediately get that
$$
\sum_{n=0}^{p^r-1}\frac{\left(\frac12\right)_n\left(\frac12\right)_{2n}}{n!^3}(20n+3)\frac{1}{2^{4n}}\equiv(-1)^{(p^r-1)/2}3p^r\pmod{p^{r+2}}.
$$
Therefore the proof of Theorem \ref{Th20n3} is complete.\qed

\section{Proof of Theorem \ref{swisher}}
\noindent{\it Proof of (\ref{gvan})}. By the identity in \cite[Lemma 2.2]{mao-rama-2018}, we obtain that
$$
\sum_{n=0}^{(p-1)/2}\frac{\binom{2n}n^2}{(n+1)16^n}=\sum_{n=0}^{(p-1)/2}\frac{\binom{-1/2}n^2}{n+1}=\frac{\binom{-3/2}{(p-1)/2}^2}{(p-1)/2+1}.
$$
With direct computation, we have
$$
\frac{\binom{-3/2}{(p-1)/2}^2}{(p-1)/2+1}=\frac{2p^{2}}{(p^{2}-1)(p-1)}\binom{p/2-1}{(p-3)/2}^2\equiv2p^{2}(1+p)(1-pH_{(p-3)/2})\pmod{p^{4}},
$$
since
$$\binom{p/2-1}{(p-3)/2}^2=\prod_{k=1}^{(p-3)/2}\left(1-\frac{p}{2k}\right)^2\equiv1-pH_{(p-3)/2}\pmod {p^2}.$$
We immediately obtain the desired result with (\ref{h12p}).\qed

\vskip 3mm \noindent{\bf Acknowledgments.}
The author is funded by the Startup Foundation for Introducing Talent of Nanjing University of Information Science and Technology (2019r062).

\end{document}